\newcommand{\indel}[1]{\partial/\partial #1}
\newtheorem{theorem}{Theorem}
\newtheorem{lemma}[theorem]{Lemma}
\newtheorem{proposition}[theorem]{Proposition}
\theoremstyle{remark}
\newtheorem{remark}[theorem]{Remark}
\newtheorem{definition}[theorem]{Definition}
 \def\RR{{\mathbb R}}
\def\NN{{\mathbb N}}
\title{Quasihomogeneous three-dimensional real analytic Lorentz metrics}
\author{Sorin Dumitrescu}\address{Universit\'e Nice-Sophia Antipolis, Laboratoire J.-A. Dieudonn\'e, UMR 7351 CNRS, Parc Valrose, 06108 Nice Cedex 2, France} \email{dumitres@unice.fr}
\keywords{real analytic Lorentz metrics, transitive Killing Lie algebras, local differential invariants}
\thanks{MSC 2010: 53A55, 53B30, 53C50}
\begin{document}
\begin{abstract} We classify germs  at the origin of   real analytic Lorentz metrics  on $\mathbf{R}^3$  which are {\it quasihomogeneous}, in the sense that they are {\it locally homogeneous}  on an open set containing the origin in its closure, but not locally homogeneous in the neighborhood of the origin.
\end{abstract}

\maketitle

\section{Introduction}

The most symmetric Riemannian and pseudo-Riemannian metrics are those for which the {\it Killing Lie algebra} is transitive. They are called {\it locally homogeneous} and their study is a traditional field in differential and Riemannian geometry. In dimension two, this means exactly that the (pseudo)-Riemannian  metric is of constant sectional curvature. Locally homogeneous  Riemannian metrics of dimension three, are the context of Thurston's 3-dimensional geometrization program~\cite{thurston}. The classification of compact locally homogeneous  Lorentz $3$-manifolds was given in~\cite{dumzeg}.

  This article deals with the classification of  germs at the origin of  three-dimensional  real analytic Lorentz metrics   which are~{\it quasihomogeneous}, in the sense that they are {\it locally homogeneous}  on an open set containing the origin in its closure, but not locally homogeneous in the neighborhood of the origin. The quasihomogeneous  Lorentz metrics  are the most symmetric ones, after those which are
  locally homogeneous. In particular, all their scalar invariants are constant. Recall that, in the Riemannian setting, that implies local homogeneity (see~\cite{Tri} for an effective result).

The main theorem in this article  is the following:

\begin{theorem} \label{main} Let~$g$ be a   real-analytic Lorentz metric  in a neighborhood of the origin in~$\mathbf{R}^3$. Suppose that the maximal  open set  on which  the Killing  Lie algebra  $\mathcal{G}$   of~$g$ is transitive contains the origin in its closure,  but does not contain the origin. Then, in adapted analytic coordinates in the neighborhood of the origin,  the germ of~$g$ at~$0$ is 
$$ g=dx^2+ dhdz+Cz^2dh^2$$ with $C \in \RR \setminus \{ 0 \}$.

In these coordinates  $\mathcal{G}=\langle \indel{x}, \indel{h},h\indel{h}-z\indel{z}\rangle.$
In particular, the Killing Lie algebra is isomorphic  to $\RR \oplus \mathfrak{aff}(\mathbf{R})$,  where $\mathfrak{aff}(\mathbf{R})$ is  the Lie algebra of the affine group of the real line. 
\end{theorem}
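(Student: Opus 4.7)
The plan is to exploit real-analyticity to push information from the locally homogeneous open set $U$ (where $\mathcal{G}$ acts transitively) to the origin. First I would check that each Killing vector field defined on $U$ extends, by the Nomizu-style analytic extension theorem for Killing fields (using that the Killing equation is analytic and that a Killing field is determined by its $1$-jet at any point), to a real-analytic Killing field on a whole neighborhood of $0$; since $0\in\overline{U}$, this realizes $\mathcal{G}$ globally near $0$ with a singular $\mathcal{G}$-orbit of dimension $\leq 2$ through the origin. Moreover, all scalar curvature invariants, being constant on $U$, remain constant at $0$ by continuity, so the germ of $g$ shares its scalar invariants with a locally homogeneous model.

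The second step is to identify $\mathcal{G}$ and the local model. Bounding $\dim\mathcal{G}\in\{3,4\}$ (higher dimensions rigidify the geometry enough to force transitivity across the singular orbit), I would go through the classification of locally homogeneous Lorentz $3$-manifolds (see \cite{dumzeg}) and ask, for each transitive Lie algebra of Killing fields, whether it admits a real-analytic degeneration to a $2$-dimensional orbit while preserving a non-degenerate Lorentz metric on a neighborhood. The expected unique candidate is $\mathcal{G}\cong\RR\oplus\mathfrak{aff}(\RR)$: the $\mathfrak{aff}(\RR)$ factor carries a natural scaling generator which can vanish along a codimension-one locus, while the extra $\RR$ factor supplies the transverse translation needed for transitivity on $U$.

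Third, I would realize $\mathcal{G}$ concretely in adapted coordinates $(x,h,z)$ with generators $\indel{x}$, $\indel{h}$, $h\indel{h}-z\indel{z}$ (so that the singular orbit is $\{h=z=0\}$), then write the most general analytic Lorentz metric invariant under all three fields: invariance under $\indel{x}$ and $\indel{h}$ kills the $x$- and $h$-dependence, while invariance under the scaling field forces each coefficient to carry the correct weight in $z$, producing only finitely many admissible analytic terms. Imposing Lorentz signature and absorbing superfluous terms by an analytic change of coordinates fixing $0$ and preserving the form of $\mathcal{G}$ should leave exactly $dx^2+dh\,dz+Cz^2dh^2$, with $C\neq 0$ being the precise obstruction to extending transitivity through the origin.

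The main obstacle will be the classification-and-elimination step of paragraph two: ruling out every other transitive Killing algebra on locally homogeneous Lorentz $3$-manifolds as a source of a real-analytic singular orbit, since the degeneration must be compatible with the Lorentz structure (no degenerate induced metric on the singular orbit, no accidental flatness or constant-curvature structure that would enlarge $\mathcal{G}$ and re-establish transitivity at $0$). A secondary difficulty is the invariant-theoretic reduction that collapses the full $\mathcal{G}$-invariant metric family to the stated one-parameter normal form, and the corresponding normalization of $C$ modulo the residual diffeomorphisms centralizing $\mathcal{G}$.
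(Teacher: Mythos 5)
Your overall skeleton (extend Killing fields analytically across the degenerate locus, identify $\mathcal G\cong\RR\oplus\mathfrak{aff}(\RR)$, then write the general $\mathcal G$-invariant metric in adapted coordinates) matches the paper, but two essential steps are missing or wrong. First, the identification of $\mathcal G$ is left as a ``classification-and-elimination'' sieve with no mechanism to carry it out. The paper does not run through the list of locally homogeneous Lorentz $3$-manifolds: it proves $\dim\mathcal G=3$ exactly (not $3$ or $4$) and solvability by an unimodularity argument (the volume function $vol(K_1,K_2,K_3)$ is a nonzero constant on the homogeneous open set but vanishes on the degenerate set), together with the fact that a linear algebraic $PSL(2,\RR)$-action has no $2$-dimensional stabilizers. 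The genuinely hard point, which your proposal does not address at all, is determining the isotropy type along the degenerate surface $S$: one must exclude unipotent and elliptic one-parameter isotropy, and the paper does this via Singer's theorem combined with the Kostant--Rosenlicht closed-orbit theorem for unipotent groups (and compactness in the elliptic case), showing that a $\mathcal G$-invariant vector field of constant norm $\leq 0$ would force local homogeneity everywhere. Only after this does the Lie-algebraic analysis pin down $\RR\oplus\mathfrak{aff}(\RR)$ with semi-simple isotropy. (Also, the degenerate locus for $\langle\indel{x},\indel{h},h\indel{h}-z\indel{z}\rangle$ is the surface $\{z=0\}$, not the curve $\{h=z=0\}$.)

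Second, the final normal-form step fails as you describe it. Invariance under $\indel{x}$, $\indel{h}$ and the scaling field leaves (after the normalizations the paper achieves with a geodesic transversal field) the \emph{two}-parameter family $g_{C,D}=dx^2+dh\,dz+Cz^2dh^2+Dz\,dx\,dh$, and the cross term $Dz\,dx\,dh$ cannot be ``absorbed by an analytic change of coordinates preserving $\mathcal G$'': for $D\neq 0$ the metric $g_{C,D}$ admits extra Killing fields and is locally homogeneous (a left-invariant metric on $SL(2,\RR)$ when $C\neq0$, on a solvable group when $C=0$), hence is not isometric to any quasihomogeneous $g_{C,0}$. The only way to conclude $D=0$ (and $C\neq0$) is the explicit computation of the full Killing algebra of $g_{C,D}$ — the content of the paper's Proposition~\ref{second} and the curvature computation showing the curvature of $g_{C,0}$ vanishes exactly on $\{z=0\}$. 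Without that computation your argument does not yield the stated one-parameter normal form, nor the claim that $C\neq0$ is equivalent to quasihomogeneity.
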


\begin{remark}
 \emph{The closed set on which $g$ is not locally homogeneous is the totally geodesic surface $S$ given by  $z=0$}. 
 The curvature tensor of $g$ vanishes exactly on $S$.
 The  Killing Lie algebra $\mathcal{G}$ also preserves the flat Lorentz metric $g_0=dx^2+ dhdz$.
\end{remark}

The quasihomogeneous germs of Lorentz metrics constructed in theorem~\ref{main} never extend to a compact manifold. Indeed, the main theorem in~\cite{Dumitrescu} asserts  that {\it a real analytic Lorentz metric on a compact $3$-manifold which is locally homogeneous on a nontrivial open set is locally homogeneous on all of the manifold}. The same is known to be true  for real analytic Lorentz  metrics in higher dimension, under the assumptions that the Killing algebra is semi-simple, the metric is geodesically complete  and the universal cover is acyclic, as a consequence of a more general result of K. Melnick~\cite{Melnick}.

In the smooth category, A. Zeghib proved in~\cite{Zeghib} that compact Lorentz $3$-manifolds which admit essential Killing fields are necessarily locally homogeneous.

These  results are  motivated by Gromov's \emph{open-dense orbit theorem}~\cite{DG,Gro} (see also~\cite{Benoist2,Feres}). Gromov's result asserts that, if the pseudogroup of local automorphisms of  a \emph{rigid geometric structure} (a Lorentz metric or an analytic connection, for example) acts with a dense orbit, then this orbit is open.  In this case, the rigid geometric structure is locally homogeneous on an open dense set. Gromov's theorem says nothing about this maximal open and dense set of local homogeneity which appears to be mysterious. In many interesting geometric situations, it may be all of the (connected)  manifold. This was proved, for instance,  for Anosov flows with differentiable stable and unstable foliations and  transverse contact  structure~\cite{BFL}. In~\cite{BF}, the authors deal with this question  and their results indicate ways in which some rigid geometric structures cannot degenerate off the open dense set. 

In a recent common work with A. Guillot we obtained, with different methods,  the  classification of germs of quasihomogeneous real analytic torsion free affine connections on surfaces~\cite{dumiguillot}.

The composition of this article is the following. In Section~\ref{section2} we use the geometry of Killing fields and geometric  invariant theory to prove that the Killing Lie algebra  of a  three-dimensional quasihomogeneous Lorentz metric $g$  is a three-dimensional solvable Lie algebra. We also show that $g$ is locally homogeneous away of a (totally geodesic) surface $S$ on which the isotropy is an one
parameter semi-simple group. Theorem~\ref{main} is proved in Section~\ref{section3}.
As a by-product of the proof  of Theorem~\ref{main}, we get  the following more technical result.

\begin{proposition} \label{second}  Let~$g$ be a   real-analytic Lorentz metric  in a neighborhood of the origin in~$\mathbf{R}^3$. Suppose  that there exists a three-dimensional solvable  subalgebra  of the Killing Lie algebra acting transitively on an
open set admitting the origin in its closure, but not in the neighborhood of the origin.  If the isotropy at the origin is an one parameter semi-simple subgroup in $O(2,1)$, then there exists  local analytic coordinates $(x,h,z)$ in the neighborhood of the origin, and real constants $C,D$ such that  the germ of $g$ at $0$ is 
$$dx^2+ dhdz+Cz^2dh^2+Dzdxdh.$$

In these coordinates, the three-dimensional solvable subalgebra is  $\mathcal{G}=\langle \indel{x}, \indel{h},h\indel{h}-z\indel{z}\rangle.$
In particular, it is isomorphic  to $\RR \oplus \mathfrak{aff}(\mathbf{R})$,  where $\mathfrak{aff}(\mathbf{R})$ is  the Lie algebra of the affine group of the real line. \\

 (i) The Killing algebra is of dimension three (and $g$ is quasihomogeneous) if and only if $C \neq 0$ and $D =0$.\\

(ii) If $C=0$ and $D \neq 0$, the Killing algebra is solvable 
of dimension $4$ and $g$ is locally homogeneous. An extra Killing field is $e^{-Dx} \indel{z}$. \\

(iii) If $C \neq 0$ and $D \neq 0$, then $g$ is isomorphic to a left invariant metric on $SL(2, \RR)$ and the Killing algebra is $\RR \oplus sl(2, \RR)$. An  extra Killling field is  $T=ah \indel{x}+ \frac{1}{2}bh^2 \indel{h} +(-bzh- \frac{a}{D}) \indel{z}$, with $a,b$ real constants such that $a(D - \frac{C}{D})=b$.\\

(iv) If $C=0$ and $D=0$, $g$ is flat and the Killing  algebra is of dimension six.\\
\end{proposition}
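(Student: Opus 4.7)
My plan is to use the Lie-algebra data and the semi-simple isotropy to normalize coordinates, then to solve the Killing equations for the remaining generators to obtain the explicit form of $g$, and finally to handle the four cases by direct computation.

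First, I would pin down the abstract structure of $\mathcal{G}$. The one-parameter isotropy $\mathcal{I}_0 \subset \mathcal{G}$ at $0$ is generated by a semi-simple element $T \in \mathfrak{so}(2,1)$, which acts on $T_0\RR^3$ with eigenvalues $0, +1, -1$ in a Witt frame. The orbit $S$ of the origin is two-dimensional and $T$-invariant; since $T$ must have a $0$-eigenvector in $T_0 S$ (the kernel of the isotropy representation into the two-dimensional tangent to the orbit), the restriction $g|_S$ cannot be a non-degenerate Lorentz form (no non-trivial element of $\mathfrak{so}(1,1)$ fixes a non-zero vector), so $T_0 S$ is the $\{0, +1\}$-eigenspace of $dT_0$. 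Lifting basis vectors of $\mathcal{G}/\mathcal{I}_0$ to $X, Y \in \mathcal{G}$ of $\mathrm{ad}_T$-weights $0$ and $-1$ and applying the Jacobi identity (using the freedom to shift $X, Y$ by multiples of $T$), one obtains $[T,X] = 0$, $[T,Y] = -Y$, $[X,Y] = 0$, identifying $\mathcal{G}$ with $\RR \oplus \mathfrak{aff}(\RR)$.

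Next, I would construct coordinates $(x,h,z)$ realizing these generators in standard form. Because $T$ is a Killing field vanishing at $0$, its flow is a local one-parameter group of isometries fixing $0$; in normal coordinates centered at $0$ with a Witt-diagonalizing basis, $T$ takes the linear form $h\,\indel{h} - z\,\indel{z}$, and one checks analytically that the orbit $S$ must coincide with $\{z = 0\}$ (an analytic $T$-invariant surface tangent to the $\{0,+1\}$-eigenspace can have no graph perturbation because $h\,\partial_h \phi + \phi = 0$ has no analytic solution except $\phi \equiv 0$). Commutation with $T$ then forces each component of $X$ and $Y$ to be a $T$-homogeneous analytic function of prescribed weight; combined with $X(0) = \indel{x}$, $Y(0) = \indel{h}$, $[X,Y] = 0$, and a flow-box adjustment along the commuting pair $(X,Y)$ that preserves $T$, one can take $X = \indel{x}$ and $Y = \indel{h}$ identically on a neighborhood. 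With the generators in hand, $L_X g = L_Y g = 0$ forces $g_{ij} = g_{ij}(z)$, and $L_T g = 0$ decouples into six ODEs $-z\,g_{ij}' + \lambda_{ij}\, g_{ij} = 0$ with $\lambda_{ij}$ the $T$-weight of $dx^i\,dx^j$; the solutions $g_{ij} = c_{ij} z^{\lambda_{ij}}$ are analytic at $z=0$ only when $\lambda_{ij} \geq 0$, which kills $g_{xz}$ and $g_{zz}$. Non-degeneracy forces the weight-zero constants $g_{xx}$ and $g_{hz}$ non-zero, and rescaling $x$ and $z$ normalizes them to $1$ and $1/2$, yielding $g = dx^2 + dh\,dz + C z^2\,dh^2 + D z\,dx\,dh$.

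For the four cases: (iv) $C = D = 0$ gives $dx^2 + dh\,dz$, which is flat by inspection. In case (ii) with $C = 0$, a direct Lie-derivative check confirms that $K = e^{-Dx}\,\indel{z}$ is Killing, producing a fourth generator and a transitive four-dimensional Killing subalgebra, so $g$ is locally homogeneous. In case (iii) with $C, D \neq 0$, verifying $L_{T_{\mathrm{extra}}} g = 0$ yields the constraint $a(D - C/D) = b$; the brackets $[X, T_{\mathrm{extra}}] = 0$, $[T, T_{\mathrm{extra}}] = T_{\mathrm{extra}}$, $[Y, T_{\mathrm{extra}}] = aX + bT$ exhibit $X$ as central and, setting $T' = T + (a/b)\,X$, show that $\langle T', Y, T_{\mathrm{extra}}\rangle$ is an $\mathfrak{sl}(2,\RR)$ complementary to $\RR X$; the resulting simply transitive action identifies $g$ with a left-invariant Lorentz metric on $SL(2,\RR)$. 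Case (i), with $C \neq 0$ and $D = 0$, requires ruling out any further Killing field: given a candidate $V$ with $L_V g = 0$, the Lie brackets $[X, V], [Y, V], [T, V]$ must also be Killing, which constrains the components of $V$ to lie in prescribed $T$-weight spaces of analytic functions in $(x, h, z)$; expanding the Killing equation in this weighted basis and using $C \neq 0$ collapses the system to the trivial solution $V \in \mathcal{G}$.

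The main obstacle is precisely case (i): excluding hidden Killing fields in the non-flat, non-locally-homogeneous regime. The other cases and the derivation of the explicit form reduce to direct verifications once the coordinates are in place; the coordinate normalization itself is the second non-trivial point, combining analytic linearization of the semi-simple Killing field $T$ with a flow-box argument for the commuting pair $(X, Y)$ that preserves the linear form of $T$.
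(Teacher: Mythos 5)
Your overall strategy (linearize the semi-simple isotropy in exponential coordinates, use $\mathrm{ad}$-weights to pin down the brackets and the metric coefficients, then treat the four cases) is a reasonable alternative to the paper's route (which restricts to $S$ first and builds the transverse coordinate with a geodesic isotropic field $Z$), but it has a genuine gap at its very first step. You assert that the isotropy generator $T$ ``must have a $0$-eigenvector in $T_0S$'', justifying this only by the parenthetical remark about ``the kernel of the isotropy representation''. This is exactly what needs proof: a boost in $O(2,1)$ preserves the Lorentzian $2$-plane spanned by its two null eigendirections and acts there with eigenvalues $\pm 1$ and trivial kernel, so a priori $T_0S$ could be that plane. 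In that configuration the hypotheses of the proposition are still satisfied, but the weight analysis would give $[T,\cdot]$-eigenvalues $+1,-1$ on $\mathcal G/\mathcal I$, hence $\mathcal G\simeq\RR\ltimes\RR^2$ (the Lie algebra $\mathfrak{sol}$, which is solvable) rather than $\RR\oplus\mathfrak{aff}(\RR)$, the fixed spacelike vector would be \emph{normal} to $S$, and your identification $S=\{z=0\}$ and the subsequent normal form would break down. Ruling this out is the content of the paper's Lemma~\ref{Killing}(i), which excludes $[\mathcal G,\mathcal G]\simeq\RR^2$ by a two-case analysis ($\mathcal I\subset[\mathcal G,\mathcal G]$ forces unipotent isotropy; $\mathcal I\not\subset[\mathcal G,\mathcal G]$ forces a central element), using the $\mathcal G$-invariant spacelike field and semi-simplicity; you need some substitute argument (the paper's, or e.g.\ the observation that the isotropy flow fixes pointwise the geodesic tangent to its fixed spacelike vector, which would produce nontrivial isotropy at points of the open orbits if that vector were transverse to $S$).

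The second gap is case (i), which is the heart of the statement: you say that expanding the Killing equations in $T$-weight spaces ``collapses the system to the trivial solution'' when $C\neq0$, $D=0$, but no argument is given, and this is precisely the nontrivial claim that $g_{C,0}$ is quasihomogeneous. The paper proves it quite differently: it computes the curvature of $g_{C,D}$ explicitly and shows that for $C\neq 0$, $D=0$ the curvature tensor vanishes exactly on $\{z=0\}$, so $g$ cannot be locally homogeneous; combined with Lemma~\ref{dim} (any extra Killing field would force local homogeneity, the dimensions $5$ and $6$ being excluded), this pins the Killing algebra at dimension three. Relatedly, in your case (ii) you do not show the Killing algebra has dimension exactly $4$ (and is solvable) --- in the paper this again rests on Lemma~\ref{dim} together with the curvature computation excluding constant curvature. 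The remaining ingredients of your plan (the Jacobi/weight computation once the weights are known to be $\{0,-1\}$, the ODEs $-zg_{ij}'+\lambda_{ij}g_{ij}=0$ with analyticity killing the negative weights, the verification of the extra Killing fields in (ii)--(iii) and flatness in (iv)) are sound, modulo some fuzziness in arranging $T$, $X$, $Y$ simultaneously in normal form, which the paper avoids by constructing the coordinates from the Killing fields and the geodesic transversal rather than by linearizing $T$ first.
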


\section{Killing Lie Algebra. Invariant Theory}   \label{section2}

Let  $g$ be a real analytic  Lorentz metric defined in an open  neighborhood $U$ of the origin in $\RR^3$.

Classically  (see, for instance~\cite{Gro,DG}) one  consider the $k$-jet of $g$, by taking at each point $u \in U$ the expression of $g$ in exponential coordinates, up to order $k$. In these coordinates, the $1$-jet of $g$ is
the standard flat Lorentz metric $dx^2+dy^2-dz^2$. At each point $u \in U$, the space of exponential coordinates is acted on simply transitively by $O(2,1)$ (which is  isomorphic to $PSL(2, \RR)$).
The space of all exponential coordinates in $U$ is a principal $PSL(2,\RR)$-bundle over $U$, also called the orthonormal frame bundle and denoted by  $R(U)$.

Geometrically, the $k$-jet of $g$ is an (analytic) $PSL(2,\RR)$-equivariant map $g^{(k)}: R(U) \to V^{(k)}$, where  $V^{(k)}$ is the  finite dimensional  affine space of $k$-jets of Lorentz metrics with origin at the $1$-jet $dx^2+dy^2-dz^2$, endowed with the linear action of $O(2,1) \simeq PSL(2, \RR)$ (notice that this action preserves the origin). More precisely, consider a system of  local exponential coordinates at $u$ with respect to $g$ and, for all $k \in \NN$  take the $k$-jet of $g$ in these  coordinates. Any  linear isomorphism of $(T_{u}U,g(u))$ gives another system of local exponential coordinates at $u$, with respect to which we consider the $k$-jet of $g$. This gives a linear  {\it algebraic}  $PSL(2,\RR)$-action on the vector space $V^{(k)}$ of $k$-jets of Lorentz metrics in exponential coordinates. One can find the details of this classical construction in~\cite{DG}.\\

Recall also that a (local) vector field is a  {\it Killing field} for a Lorentz metric $g$ if its (local) flow preserves $g$. The collection of all local Killing fields in the neighborhood of a point has the structure of a finite dimensional Lie algebra called the {\it Killing algebra} of $g$. We will denote it by $\mathcal G$.

At  a given point $u \in U$, the subalgebra $\mathcal I$ of $\mathcal G$ consisting on those vector fields $X \in \mathcal G$  such that $X(u)=0$,  is called the {\it isotropy} algebra at $u$.

\begin{definition} The  Lorentz metric  $g$ is said to be locally homogeneous on an open subset $W \subset U$, if for any $w \in W$ and any  tangent vector $V  \in T_{w}W$, there exists a local Killing field $X$ of
$g$ such that $X(w)=V.$ In this case, we will say that the Killing algebra $\mathcal G$ is transitive on $W$.
\end{definition}

 Our proof   of theorem~\ref{main}  will need analyticity in an essential way. We will make use of an extendability  result  for  local  Killing fields proved first by Nomizu in the Riemannian setting~\cite{Nomizu} and generalized then for  any rigid geometric structures by Amores and Gromov~\cite{Amores,Gro,DG}. This phenomenon states  that a  local Killing field of $g$ can  be extended  by monodromy along any curve $\gamma$  in $U$ and the resulting Killing field only depends on  the homotopy type of $\gamma$.
 
 Here we will assume that $U$ is  connected and simply connected  and hence, {\it  local Killing fields extend on all of $U$. }
  In particular, the Killing  algebra in the neighborhood of any point is the same.

 Notice  that Nomizu's extension phenomenon doesn't imply that the extension of a family of  pointwise linearly independent Killing fields, stays linearly independent.\\
 
{\it Assume for now on  that  $g$ is a   quasihomogeneous Lorentz metric in the neighborhood of the origin in $\RR^3$.}\\

  The set of points $s$  in $U$ at which  the Killing algebra  $\mathcal G$ of $g$ does  not span the  tangent space $T_sU$  is a nontrivial locally closed  analytic subset $S$ in $U$ passing through 
origin. In this case, $g$  is locally homogeneous on each connected component of $U \setminus S$, but not in the neighborhood of points in $S$.
 
 Moreover, at  each point $s$ of $S$ one will get a {\it nontrivial isotropy algebra}  given by the kernel of the canonical evaluation morphism $ev : \mathcal G \to T_sU$ (which is, by definition, of nonmaximal rank  at points of $S$).
 
 Let us prove  the following 
 
 \begin{lemma} \label{3}The Killing algebra $\mathcal G$ cannot be both  3-dimensional and unimodular.
 \end{lemma}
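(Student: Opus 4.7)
The plan is to use the Lorentz volume form as a test function. Pick a basis $X_1,X_2,X_3$ of $\mathcal{G}$ (assumed $3$-dimensional) and let $\omega$ denote the analytic Lorentz volume form on $U$ (well defined up to sign, since $U$ is simply connected). Define the analytic function
\[
f : U \to \RR, \qquad f(p) := \omega_p\bigl(X_1(p), X_2(p), X_3(p)\bigr).
\]
The vanishing locus of $f$ is exactly the set where $X_1,X_2,X_3$ fail to be a frame, i.e.\ where $\mathcal{G}$ does not span the tangent space. By the description of $S$ recalled just above the lemma, this vanishing locus contains $S$, and in particular contains the origin.

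The key computation is to determine how $f$ transforms under elements of $\mathcal{G}$. Since each $X_k$ is Killing, it preserves $\omega$, so $L_{X_k}\omega=0$. Writing structure constants $[X_k,X_i]=\sum_j c_{ki}^{j}X_j$ and using that $\omega$ is alternating,
\[
L_{X_k} f \;=\; \sum_{i=1}^{3}\omega\bigl(X_1,\dots,[X_k,X_i],\dots,X_3\bigr) \;=\; \Bigl(\sum_{i} c_{ki}^{i}\Bigr) f \;=\; \operatorname{tr}\bigl(\operatorname{ad} X_k\bigr)\cdot f.
\]
If $\mathcal{G}$ were unimodular, every $\operatorname{tr}(\operatorname{ad} X_k)$ vanishes, so $L_{X}f\equiv 0$ for every $X\in\mathcal{G}$; that is, $f$ is constant along every $\mathcal{G}$-orbit.

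Now comes the contradiction. By hypothesis, there is a connected component $W$ of $U\setminus S$ that is a single $\mathcal{G}$-orbit (transitivity) and whose closure contains the origin. On $W$ the analytic function $f$ is constant, and by continuity this constant equals the boundary value at any point of $S\cap\overline{W}$, which is $0$. Hence $f\equiv 0$ on $W$, meaning $X_1,X_2,X_3$ are everywhere linearly dependent on $W$. This contradicts the transitivity of the $3$-dimensional algebra $\mathcal{G}$ on $W$.

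The only step with any subtlety is the Lie-derivative identity $L_{X_k}f=\operatorname{tr}(\operatorname{ad}X_k)\,f$, but it is a direct bookkeeping once one uses that $X_k$ preserves $\omega$; I do not expect a real obstacle. The argument crucially uses the global existence of the volume form (hence the assumption that $U$ is simply connected so that $g$ is orientable, or one can work with the density $|\omega|$ since only $f$ and its vanishing matter), and analyticity only to identify the boundary value of the constant $f|_W$ with $0$.
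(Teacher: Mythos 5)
Your proof is correct and follows essentially the same route as the paper: both consider the Lorentz volume of a basis of Killing fields, use unimodularity to see it is constant on the open set of transitivity, and derive a contradiction from its vanishing on $S$. Your version merely makes explicit, via the identity $L_{X_k}f=\operatorname{tr}(\operatorname{ad}X_k)\,f$, the step the paper states without computation.
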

 
 \begin{proof} Let $K_1$, $K_2$ and $K_3$ be a basis of the Killing algebra. Consider the analytic function $v=vol(K_1, K_2, K_3)$, where $vol$ is the volume of the Lorentz metric. Since $\mathcal G$ is unimodular, the function $v$ is $\neq 0$ and  constant on each open set where $\mathcal G$ is transitive. On the other hand, $v$ vanishes  on $S$: a contradiction.
 \end{proof}

 We prove now that:

\begin{lemma} \label{dim}  (i) The dimension of the isotropy  at a point $u \in U$ is $\neq 2$.

                             (ii) The Killing algebra $\mathcal G$  is of dimension $3$.
                             
                             (iii) The Killing algebra  $\mathcal G$ is solvable.
\end{lemma}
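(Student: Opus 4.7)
The key observation is that at every $u\in U$ the isotropy $\mathcal{I}_u$ embeds into $\mathfrak{o}(2,1)\cong\mathfrak{sl}_2(\mathbf R)$ via its linearization on $T_uU$, so $\dim\mathcal{I}_u\in\{0,1,2,3\}$, with the two-dimensional case being (up to conjugacy) the Borel subalgebra. The heart of the argument is (i); the other two parts will then follow by classical dimension counting and Lemma~\ref{3}.

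For (i) I would run an invariant-theoretic argument to rule out Borel isotropy. Identifying $T_uU$ with the standard irreducible three-dimensional representation $V\cong S^2$ of $\mathfrak{sl}_2(\mathbf R)$, one has $\mathrm{Sym}^2 V^*\cong S^4\oplus S^0$. The Borel subalgebra has no nonzero fixed vectors in any $S^n$ with $n\geq 1$: the torus weights on $S^n$ are $n,n-2,\dots,-n$, and the unique torus-weight-zero vector (existing only when $n$ is even) is never killed by the unipotent radical unless $n=0$. Hence Borel invariance of $\mathrm{Ric}(u)$ forces $\mathrm{Ric}(u)=c\,g(u)$, i.e.\ $g$ is Einstein at $u$. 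To propagate Einstein-ness to all of $U$, I would exploit that on the locally homogeneous open set $U\setminus S$ every scalar invariant of $g$ is constant; in particular a scalar invariant detecting the Einstein defect (built from the traceless Ricci) is constant on $U\setminus S$ and vanishes at $u$, hence vanishes identically by real-analyticity. In dimension three, Einstein forces constant sectional curvature, and the corresponding metric is locally homogeneous everywhere with isotropy $\mathfrak{o}(2,1)$ of dimension three, contradicting both quasihomogeneity and the assumption $\dim\mathcal{I}_u=2$.

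For (ii), since $\mathcal{G}$ is transitive on $U\setminus S$, we have $3\leq\dim\mathcal{G}\leq 6$. Dimension six forces constant sectional curvature, hence global local homogeneity, contradicting quasihomogeneity. Dimension five would impose generic isotropy of dimension two, contradicting (i). In dimension four the generic isotropy is one-dimensional, and at any $u\in S$ it must jump; by (i) the jump goes directly from dimension one to dimension three, so $\mathcal{I}_u\cong\mathfrak{sl}_2(\mathbf R)$. Then the $\mathcal{G}$-orbit through $u$ is one-dimensional, and its tangent line is a nontrivial $\mathcal{I}_u$-invariant subspace of $T_uU\cong\mathbf R^{2,1}$; but the standard representation of $\mathfrak{sl}_2(\mathbf R)$ on $\mathbf R^{2,1}$ is irreducible, a contradiction. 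Hence $\dim\mathcal{G}=3$. For (iii), the only non-solvable real three-dimensional Lie algebras are $\mathfrak{sl}_2(\mathbf R)$ and $\mathfrak{so}(3)$, both semisimple and therefore unimodular; Lemma~\ref{3} forbids a three-dimensional unimodular Killing algebra, forcing $\mathcal{G}$ to be solvable.

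The main obstacle will be the propagation step in (i): Borel-invariance is initially information only at the single point $u$, yielding Einstein there but not on $U$. Bridging this gap requires coupling the pointwise invariant-theoretic input with the constancy of scalar invariants on $U\setminus S$ and with real-analyticity across $S$; the delicate technical point is to produce an analytic scalar invariant in Lorentz signature that both detects the Einstein condition and transports rigidly from the exceptional locus.
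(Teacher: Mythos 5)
Your parts (ii) and (iii) track the paper's argument essentially verbatim (dimension $6$ gives constant curvature, dimension $5$ contradicts (i), dimension $4$ forces an $\mathfrak{sl}_2(\RR)$-isotropy at a point of $S$ preserving the line $\mathrm{im}\,ev(s)$, contradicting irreducibility; then semisimple would be unimodular, excluded by Lemma~\ref{3}). The problem is (i), which is where your route genuinely diverges and where it has a gap you yourself flag but cannot close. Your plan reduces Borel-invariance of $\mathrm{Ric}(u)$ to ``Einstein at $u$'' (that pointwise step is fine), and then tries to propagate via ``a scalar invariant detecting the Einstein defect'' that is constant on $U\setminus S$ and vanishes at $u$. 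No such invariant exists in Lorentzian signature: polynomial scalar invariants of the traceless Ricci (or of any curvature quantity) can all vanish while the tensor itself is a nonzero null/nilpotent-type tensor, since the induced metric on symmetric $2$-tensors is indefinite. The metrics classified in this very paper are the counterexample: for $g=dx^2+dh\,dz+Cz^2dh^2$ ($C\neq 0$) all scalar curvature invariants are constant equal to $0$ (they vanish on $S$ and are constant on the homogeneous open set), yet the curvature, and in particular the traceless Ricci, is nonzero for $z\neq 0$ and the metric is not of constant curvature. So the bridging step of your argument cannot be made to work by scalar invariants, and (i) is left unproved.

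The paper avoids any propagation and works entirely at the point $u$: a two-dimensional isotropy embeds in $\mathfrak{sl}_2(\RR)$ and preserves the $k$-jet of $g$ at $u$ in exponential coordinates for every $k$; since stabilizers of finite-dimensional algebraic $PSL(2,\RR)$-actions never have dimension $2$ (check on irreducibles), the full identity component of $PSL(2,\RR)$ stabilizes every $k$-jet; by analyticity the corresponding linear vector fields at $u$ then preserve $g$ itself, so the isotropy at $u$ would contain a copy of $\mathfrak{sl}_2(\RR)$ --- contradicting that it is two-dimensional. If you want to salvage your approach, you would have to replace the scalar-invariant bridge by this jet-level argument (i.e., use Borel-invariance of \emph{all} jets at $u$ together with the algebraic fact about stabilizer dimensions), at which point you have essentially reconstructed the paper's proof.
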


\begin{proof} (i) Assume by contradiction that the isotropy algebra $\mathcal{I}$ at a point $u \in U$ has dimension two. 
 
 Elements of $\mathcal{I}$ linearize in exponential coordinates at $u$.

Since elements of $\mathcal{I}$  preserve $g$, they preserve, in particular,  the $k$-jet of $g$ at $u$, for all $k \in \NN$.
 This gives an embedding of $\mathcal{I}$ in the Lie algebra of $PSL(2,\RR)$ such that  the corresponding (two dimensional)   connected subgroup of $PSL(2,\RR)$ preserves the $k$-jet of $g$ at $u$, for all  $k \in \NN$. 
 
 Now we use the  fact that {\it the stabilizers of a  finite dimensional  linear  algebraic $PSL(2,\RR)$-action are of dimension  $\neq 2$}. Indeed, it suffices to check this statement for irreducible
 linear representations of $PSL(2, \RR)$, for which it is well-known that the stabilizer in $PSL(2,\RR)$  of a nonzero element is one dimensional~\cite{Kir}.
 
 It follows that the stabilizer in $PSL(2,\RR)$  of the $k$-jet of $g$ at $u$ is of dimension three and hence contains the connected component of identity in $PSL(2, \RR)$. Consequently,  in exponential coordinates  at $u$, each element of the connected  component of the identity in $PSL(2, \RR)$  gives rise  to a local linear vector field which preserves $g$ (for it preserves all $k$-jets
 of $g$). The isotropy algebra $\mathcal{I}$ contains a copy of the Lie algebra of $PSL(2, \RR)$: a contradiction, since $\mathcal{I}$ is of dimension two.
 
 (ii) Since $g$ is quasihomogeneous, the Killing algebra is of dimension at least $3$.
 
 For a three-dimensional Lorentz metric, the maximal dimension of the Killing algebra is $6$. This characterizes Lorentz metrics of constant sectional curvature. Indeed, in this case,  the isotropy is, at each point,  of dimension three and acts transitively on the nondegenerate  2-plans (see, for instance, ~\cite{Wolf}). These Lorentz metrics are locally homogeneous (and not quasihomogeneous).
 
 Assume, by contradiction, that the Killing algebra of $g$  is of dimension $5$. Then, on any open set of local homogeneity the isotropy is two-dimensional. This is in contradiction with point (i).
 
 Assume by contradiction that the Killing algebra of $g$  is of dimension $4$. Then, at a point $s \in S$,  the isotropy has dimension $\geq 2$. Hence, point (i) implies that the isotropy  at  $s$  has dimension three (isomorphic to $PSL(2, \RR))$. Moreover, the  standard linear action of the isotropy on $T_sU$ preserves the image of the evaluation morphism $ev(s): \mathcal G \to T_sU$, which is a line. But the standard $3$-dimensional  $PSL(2, \RR)$-representation does not admit invariant lines: a contradiction. Therefore, the Killing algebra is three-dimensional.
 
 (iii) A Lie algebra of dimension three is semi-simple or solvable~\cite{Kir}. Since semi-simple Lie algebras are unimodular, Lemma~\ref{3} implies that  $\mathcal G$ is solvable.

 \end{proof}

\begin{lemma} \label{unimodular}   $S$ is a connected  real analytic submanifold of codimension one.

  \end{lemma}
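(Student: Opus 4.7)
The plan is to first determine the possible isotropy dimensions at points of $S$, to identify $S$ locally with a single $2$-dimensional orbit of $\mathcal{G}$, and finally to extract the submanifold structure via a transversality argument.

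At any $u \in U$, the kernel of the evaluation map $ev(u) \colon \mathcal G \to T_uU$ is the isotropy, and its dimension is $3 - \operatorname{rank} ev(u)$; points of $S$ are characterized by this dimension being $\geq 1$. Lemma~\ref{dim}(i) rules out isotropy dimension $2$. I would exclude dimension $3$ as follows: if all of $\mathcal G$ vanished at some $s \in S$, then its elements would linearize in exponential coordinates at $s$, embedding $\mathcal G$ as a $3$-dimensional subalgebra of $\mathfrak{psl}(2,\RR)$, hence as all of $\mathfrak{psl}(2,\RR)$; this contradicts the solvability established in Lemma~\ref{dim}(iii), since $\mathfrak{psl}(2,\RR)$ is simple. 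Therefore every $s \in S$ has $1$-dimensional isotropy and its orbit $\mathcal O(s)$ is $2$-dimensional.

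Because $\mathcal G$ consists of Killing fields, $S$ is $\mathcal G$-invariant, so each orbit through a point of $S$ is contained in $S$. For fixed $s \in S$, I would pick a $1$-dimensional analytic transversal $T$ to $\mathcal O(s)$ at $s$. The intersection $S \cap T$ is a real analytic subset of the $1$-manifold $T$, hence either discrete near $s$ or coinciding with $T$ near $s$. In the latter case, the $\mathcal G$-saturation of $T$ near $s$ would produce a $3$-dimensional open subset of $U$ contained in $S$, contradicting that $S$ has empty interior (which holds by the existence of the open set on which $\mathcal G$ acts transitively). Thus $S$ coincides locally with $\mathcal O(s)$, giving $S$ the structure of a real analytic codimension-one submanifold.

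For connectedness, I would shrink $U$ to a sufficiently small neighborhood of the origin. Near $0$, the set $S$ coincides with the orbit $\mathcal O(0)$, which is path-connected; since an analytic subset of $U$ has locally finitely many irreducible components, further shrinking excludes any other component of $S$ not containing $0$.

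The step I expect to require the most care is the exclusion of $3$-dimensional isotropy along $S$: one must combine Nomizu's linearization of Killing fields in exponential coordinates with the already-established solvability of $\mathcal G$, together with the identification $O(2,1) \simeq PSL(2,\RR)$ that makes the isotropy representation land in the simple Lie algebra $\mathfrak{psl}(2,\RR)$. Once this is in place, the transversal argument and the local-to-global connectedness step are fairly standard.
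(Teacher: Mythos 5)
Your proposal is correct and follows essentially the same route as the paper: rule out isotropy of dimension $2$ (Lemma~\ref{dim}(i)) and of dimension $3$ (a three-dimensional isotropy would embed $\mathcal G$ into, hence onto, $\mathfrak{psl}(2,\RR)$, contradicting solvability), so the isotropy along $S$ is one-dimensional and $S$ locally coincides with a two-dimensional $\mathcal G$-orbit, with connectedness obtained by shrinking $U$. Your transversal/saturation argument merely spells out the step the paper states tersely ("since the $\mathcal G$-action preserves $S$, this implies that $S$ is a smooth submanifold of codimension one"), so no substantive difference.
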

  
\begin{proof}

 If needed, one can  shrink the open set $U$ in order to get $S$ connected. Let $(K_1, K_2, K_3)$ a basis of the Killing algebra $\mathcal G$.  Then $S$ is defined by the equation $v=vol(K_1,K_2,K_3)=0$, which is an analytic subset  in $U$.  By  point (i) in Lemma~\ref{dim}, the isotropy algebra at points in  $S$ has dimension one or three. We prove that this dimension must  be equal to one.

Assume, by contradiction, that there exists $s \in S$ such that the isotropy at $s$ has dimension three. Then, the isotropy algebra at $s$ is isomorphic to the Lie algebra of (the full) linear group $PSL(2, \RR)$.
On the other hand, since both are $3$-dimensional,  the isotropy algebra at $s$ is isomorphic to $\mathcal G$. Hence, $\mathcal G$ is isomorphic to the Lie algebra of $PSL(2, \RR)$ which is semi-simple.
This is in contradiction with Lemma~\ref{dim} (point (iii)).

It follows that the isotropy algebra at each point $s \in S$ is of dimension one. Equivalently, the evaluation morphism $ev(s): \mathcal G \to T_sU$ has rank two. Since the $\mathcal G$-action preserves
$S$, this implies that $S$ is a smooth submanifold of codimension one in $U$ and $T_sS$ coincides with the image of $ev(s)$. Moreover, $\mathcal G$ acts transitively on $S$ (which locally coincides
with the $\mathcal G$- orbit  of  $s$ in $U$).

\end{proof}

Let us   recall Singer's result~\cite{Singer,DG,Gro} which asserts that {\it $g$ is locally homogeneous if and only if the image of $g^{(k)}$ is exactly one $PSL(2, \RR)$-orbit in $V^{(k)}$, for a certain $k$ (big enough).}

       As a consequence of Singer's theorem we get:

\begin{proposition} \label{invariant theory}  If $g$ is quasihomogeneous, then the Killing algebra $\mathcal G$ does not preserve any vector field of constant norm $\leq 0$.
\end{proposition}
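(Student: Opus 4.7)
My plan is to argue by contradiction. Assume $X$ is a vector field on $U$ preserved by every element of $\mathcal G$ with $\langle X,X\rangle = c$ a real constant with $c\leq 0$. Since both $g$ and $X$ are $\mathcal G$-invariant, the enhanced pair $(g,X)$ is a $\mathcal G$-invariant rigid geometric structure, and it is locally homogeneous on $U\setminus S$ because $\mathcal G$ acts there transitively with trivial isotropy (by Lemma~\ref{dim}).

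The main step is to apply Singer's theorem, not to $g$ alone but to the enhanced structure $(g,X)$. For $k$ sufficiently large, the combined $PSL(2,\RR)$-equivariant jet
$$
(g^{(k)},\tilde X)\colon R(U)\longrightarrow V^{(k)}\times \RR^{3}
$$
sends $R(U\setminus S)$ into a single principal $PSL(2,\RR)$-orbit $\tilde{\mathcal O}$ (principal because the generic isotropy of $\mathcal G$ is trivial). By continuity and analyticity, the image of $R(S)$ lies in $\overline{\tilde{\mathcal O}}\setminus \tilde{\mathcal O}$, in an orbit whose $PSL(2,\RR)$-stabilizer is one-dimensional, matching the one-dimensional isotropy of $\mathcal G$ along $S$ from Lemma~\ref{unimodular}. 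For $s\in S$ with $X(s)\neq 0$, the isotropy must sit inside the $PSL(2,\RR)$-stabilizer of $\tilde X(r_s)\in \RR^{3}$; since $\tilde X(r_s)$ is null or timelike under the hypothesis $c\leq 0$, this stabilizer is either the one-parameter unipotent subgroup (when $c=0$) or the compact rotation $SO(2)$ (when $c<0$), and in particular never the split one-parameter torus of semi-simple elements.

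The contradiction is obtained by showing that this type of limiting stabilizer cannot occur in the degeneration of a principal orbit inside the finite-dimensional linear $PSL(2,\RR)$-representation $V^{(k)}\times\RR^{3}$. Concretely, I would examine orbit closures of principal orbits in linear algebraic actions of $PSL(2,\RR)\simeq SO^{+}(2,1)$ and argue, using the analyticity of $g$ and the transversal degeneration of $\mathcal G$ near $S$, that only the split torus (semi-simple) can appear as the generic stabilizer of a codimension-one boundary orbit adjacent to the principal orbit. This invariant-theoretic step is, I expect, the genuine obstacle: it requires a careful real-algebraic analysis of stabilizer stratifications that is not provided by Singer's theorem alone.

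It remains to treat the residual case $X(s)=0$ for some $s\in S$, which under $\langle X,X\rangle = c$ constant forces $c=0$. Then $\mathcal G$-invariance and the connectedness of the $\mathcal G$-orbit $S$ imply $X\equiv 0$ on $S$, so $X$ is a $\mathcal G$-invariant null vector field vanishing exactly on a codimension-one analytic set. Using the commutation $[\mathcal G, X]=0$ together with Nomizu's extension principle, one exploits $X$ to build an additional global symmetry of $g$, contradicting $\dim\mathcal G=3$ from Lemma~\ref{dim}.
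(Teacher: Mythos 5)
There is a genuine gap, and you have identified it yourself: the entire contradiction in your argument rests on the claim that, in a linear algebraic $PSL(2,\RR)$-representation, a boundary orbit adjacent to the principal orbit can only have a split (semi-simple) one-parameter group as stabilizer, and you explicitly leave this ``invariant-theoretic step'' unproved, calling it the genuine obstacle. Without it there is no contradiction, so the proposal is not a proof. Moreover, the claim as stated is far from obvious and would require a real-algebraic stratification analysis that nothing in your setup supplies; it is precisely the kind of statement the paper's argument is designed to avoid. The key idea you are missing is that the invariant vector field $X$ itself should be used to \emph{reduce the structure group}: frames adapted to $X$ form a subbundle $R'(U)$ of $R(U)$ with structure group $H$, the stabilizer in $PSL(2,\RR)$ of a null (resp.\ timelike) vector, which is unipotent (resp.\ compact). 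The jet map $g^{(k)}\colon R'(U)\to V^{(k)}$ is then only $H$-equivariant, and one invokes the Kostant--Rosenlicht theorem (orbits of algebraic actions of unipotent groups are closed), or the trivial closedness of compact-group orbits, to conclude that the $H$-orbit $\mathcal O$ containing the jets over an open set of local homogeneity already contains the jets over the adjacent points of $S$; since $\mathcal G$ is transitive on $S$ and every homogeneity component accumulates on $S$, the whole image of $R'(U)$ is the single orbit $\mathcal O$, and Singer's theorem forces $g$ to be locally homogeneous everywhere, contradicting quasihomogeneity. Thus the paper never needs to classify boundary orbits or their stabilizers; it shows no degeneration of the orbit occurs at all. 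Your scheme, which keeps the full $PSL(2,\RR)$ and the ``principal orbit plus boundary orbit'' picture, cannot be completed as written without supplying the missing stratification result.

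A secondary weakness: your last paragraph (the case $X(s)=0$, forcing $c=0$) is also only a sketch --- ``one exploits $X$ to build an additional global symmetry'' is not an argument, and in fact the statement should be read, as the paper does, for nonvanishing fields (an isotropic vector field in the paper's sense), since the zero field is trivially invariant of norm $0$. So even the residual case is not actually handled.
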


\begin{proof} Let $k \in \NN$ be given by Singer's theorem.

First suppose, by contradiction, that  there exists a  isotropic vector field $X$ in $U$,  preserved by $\mathcal G$. Then the $\mathcal G$-action  on $R(U)$ (lifted from the action on $U$) preserves the subbundle $R'(U)$, which is a reduction 
of the structural group $PSL(2, \RR)$ to the stabilizer $H=\left(\begin{array}{cc} 1 & T \\ 0 & 1\\ \end{array}\right)$ (with  $T \in \RR$) of an isotropic vector in the standard linear representation of 
$PSL(2, \RR) \simeq O(2,1)$ on $\RR^3$.  One consider now only exponential coordinates with respect to frames preserving  $X$ and get a  $H$-equivariant $k$-jet map $g^{(k)}:R'(U) \to V^{(k)}$.

On each open set $W$ on which  $g$ is locally homogeneous, the image  $g^{(k)}(R'(W))$ is exactly one $H$-orbit $\mathcal O$  in $V^{(k)}$. Let $s \in S$ being a point in the closure of $W$. Then the image
through $g^{(k)}$ of the fiber of $R'(W)_s$ above $s$ lies in the closure of $\mathcal O$. But, here  $H$ is unipotent  and a classical result due to Konstant and Rosenlicht~\cite{Rosenlicht} asserts that 
{\it for algebraic representations of unipotent groups, the orbits are closed. } This implies that the image $g^{(k)}(R'(W)_s)$ is also $\mathcal O$. Since $\mathcal G$ acts transitively on $S$, this holds for all
$s \in S$.

But any open set of local homogeneity in $U$ admits points of $S$ in its closure. It follows that the image of $R'(U)$ through $g^{(k)}$ is exaclty the orbit $\mathcal O$ and Singer's  theorem
implies that $g$ is locally homogeneous (and not quasihomogeneous): a contradiction.

If, also by contradiction, there exists  a $\mathcal G$-invariant    vector field $X$ in $U$,    of constant strictly negative $g$-norm, then the  $\mathcal G$-action on $R(U)$  also preserves a subbundle $R'(U)$, with structural group $H'$. Here
$H'$  is the stabilizer  of a strictly negative vector  in the standard linear representation of 
$PSL(2, \RR)$ on $\RR^3$. Moreover, $H'$  is  {\it a compact one parameter (elliptic)  group} in $PSL(2, \RR)$. The previous argument works, replacing  Konstant-Rosenlicht theorem, by the obvious  fact that  orbits of compact group (smooth) actions are closed.

\end{proof}

\begin{lemma}  \label{X} At each point $s$  in $S$, the isotropy  is   an  one parameter semi-simple subgroup in $PSL(2, \RR)$. 
  \end{lemma}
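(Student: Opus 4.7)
Pick a non-zero $X \in \mathcal I_s$. Its linearization $A := \nabla X(s) \in \mathfrak o(T_sU, g_s) \cong \mathfrak o(2,1)$ generates the one-parameter isotropy subgroup in $O(2,1)$, and preserves $T_sS$ since $X$ preserves $S$. Up to conjugation there are three types for a non-zero element of $\mathfrak o(2,1)$: elliptic, hyperbolic, and parabolic (unipotent); the first two give semi-simple one-parameter subgroups and the third does not. The lemma thus amounts to excluding the parabolic type. My plan is to suppose $A$ parabolic and derive, from the $\mathcal I_s$-fixed isotropic direction at $s$, a $\mathcal G$-invariant isotropic line field on $U$; the associated reduction of the frame bundle will then yield, through the Kostant--Rosenlicht plus Singer mechanism used in the proof of Proposition~\ref{invariant theory}, that $g$ is locally homogeneous on $U$, contradicting quasihomogeneity.

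Suppose $A$ is parabolic. A short Jordan-form computation shows that $A$ has a unique invariant line in $T_sU$, the isotropic line $\ell_s := \ker A$, and a unique $A$-invariant 2-plane, the degenerate plane $\ker A^2 = \ell_s^\perp$. Since $T_sS$ is itself $A$-invariant of dimension 2, one must have $T_sS = \ell_s^\perp$: thus $S$ is a null hypersurface in $U$ and $\ell_s$ is the radical of its induced degenerate metric. By $\mathcal G$-transitivity on $S$ and $\mathcal I_s$-invariance of $\ell_s$, the radical propagates to a $\mathcal G$-invariant isotropic line field $\ell$ defined on all of $S$.

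To extend $\ell$ to a $\mathcal G$-invariant isotropic line field $\mathcal L$ on $U$ I will work in the projectivized null-cone bundle $\pi : \mathcal N U \to U$, an $S^1$-bundle carrying a natural lifted $\mathcal G$-action. On each connected component $W_0$ of $U \setminus S$ the algebra $\mathcal G$ acts simply transitively, so the $\mathcal G$-orbits in $\pi^{-1}(W_0)$ form a 1-parameter family of 3-dimensional sections, parametrized by the fiber $\mathcal N_{w_0} \cong S^1$ over any base point $w_0 \in W_0$. The parabolic $\mathcal I_s$ acts on the fiber $\mathcal N_s$ with $\ell_s$ as its unique fixed direction, so any analytic limit at $s$ of such a section must, by $\mathcal G$-invariance, be $\mathcal I_s$-fixed, hence equal to $\ell_s$. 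Taking the closure in $\mathcal N U$ of the unique orbit admitting such a limit gives a 3-dimensional analytic $\mathcal G$-invariant subset projecting bijectively onto $W_0 \cup S$; repeating on each $W$-component and gluing by analyticity produces $\mathcal L$. The $\mathcal G$-invariant $H$-subbundle of $R(U)$ associated to $\mathcal L$ (where $H$ is the unipotent stabilizer in $PSL(2, \RR)$ of a null vector) then allows one to run the argument of Proposition~\ref{invariant theory} verbatim.

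The hard part will be this extension step. Concretely, one has to show rigorously that exactly one $\mathcal G$-orbit in $\pi^{-1}(W_0)$ admits a well-defined analytic limit at $s$, and that its closure meets the closed $\mathcal G$-orbit $\{\ell_{s'}\}_{s' \in S} \subset \pi^{-1}(S)$ bijectively. Uniqueness of the $\mathcal I_s$-fixed direction on $\mathcal N_s$ --- the characteristic geometric feature distinguishing parabolic from elliptic or hyperbolic isotropy --- is what makes the orbit-closure argument close; the rest is a routine transcription of the proof of Proposition~\ref{invariant theory}.
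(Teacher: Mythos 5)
Your proposal reduces the lemma to excluding only the parabolic case, on the grounds that elliptic one-parameter subgroups are also ``semi-simple''. That is not what the statement means here, nor what the rest of the paper needs: in this paper ``semi-simple'' isotropy means the $\RR$-split (hyperbolic) one-parameter subgroup, i.e.\ the isotropy fixes a vector of \emph{strictly positive} norm. The proof of Lemma~\ref{Killing} and the normal form of Lemma~\ref{normal form} rely on a $\mathcal G$-invariant field $X$ of constant norm $1$ whose orthogonal $X^{\bot}$ is a Lorentz plane on which the isotropy expands one null direction and contracts the other; none of this survives if the isotropy is elliptic (then the fixed vector is timelike and $X^{\bot}$ is spacelike). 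So the elliptic case must be excluded, and your proof never addresses it. The paper rules it out by the second half of Proposition~\ref{invariant theory}: an invariant field of constant strictly negative norm would reduce the frame bundle to a compact one-parameter stabilizer, whose orbits are closed, and Singer's theorem would again force local homogeneity. This missing case is a genuine gap.

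Even restricted to the parabolic case, the step you yourself flag as ``the hard part'' is in fact the whole difficulty and is not established: a parabolic one-parameter group acting on the circle fiber $\mathcal N_s$ has only two closed invariant subsets (the fixed direction and the whole circle), so the closure over $s$ of a $\mathcal G$-orbit of line fields could a priori contain the entire fiber, and neither the existence nor the uniqueness of an orbit with a well-defined analytic limit is proved. Moreover, even granting a $\mathcal G$-invariant isotropic \emph{line} field on $U$, the associated reduction of $R(U)$ has structure group the stabilizer of an isotropic line, a two-dimensional (Borel-type) subgroup which is not unipotent, so Kostant--Rosenlicht does not apply ``verbatim''; Proposition~\ref{invariant theory} needs an invariant isotropic \emph{vector} field, i.e.\ a choice of scale along the line field. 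The paper's proof avoids all of this: take the vector $X(s)$ fixed by the isotropy (whatever its causal type), extend it analytically with constant norm along an arc transverse to $S$, and spread it by $\mathcal G$-invariance --- this is well defined across $S$ precisely because the isotropy fixes $X(s)$ --- obtaining a $\mathcal G$-invariant vector field of constant norm near $s$. Proposition~\ref{invariant theory} then forbids constant norm $\le 0$, which eliminates the parabolic and elliptic possibilities simultaneously and yields the lemma in a few lines.
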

  
  \begin{proof} Pick up a point $s \in S$ and consider a vector $X(s) \in T_{s}U$ which is fixed by the isotropy at $s$. Three distinct possibilities might appear: either $X(s)$ is isotropic 
  (the isotropy corresponds to an one parameter unipotent subgroup in $PSL(2,\RR)$), or $X(s)$ is of strictly negative norm (the isotropy corresponds to an   one parameter elliptic subgroup in $PSL(2,\RR)$),
  or $X(s)$ 
  is of strictly  positive norm (the isotropy corresponds to an one parameter  semi-simple subgroup in $PSL(2,\RR)$).

  Consider a real analytic arc  $c(t)$ transverse to $S$ in $s$ and extend
  $X(s)$ to  a real analytic vector field $\bar{X}$ of constant $g$-norm  defined along the curve  $c(t)$ (this extension is not unique).  Then extend  $\bar{X}$ in a unique way, by $\mathcal G$-invariance,  
  to  a  real analytic vector field $X$,  defined  in a neighborhood of $s$ in $U$ . The vector field $X$ is well defined  even on $S$ (on which the $\mathcal G$-action is not simply transitive), because $X(s)$ is invariant by the isotropy.
  
  By Proposition~\ref{invariant theory}, the $\mathcal G$-invariant vector field $X$ should be of constant strictly positive norm. Hence, the isotropy at $s$  is an one parameter semi-simple subgroup in $PSL(2,\RR)$.
  \end{proof}

  \section{Quasihomogeneous Lorentz metrics with semi-simple isotropy}   \label{section3}

 In this section we  settle the remaining  case, where the isotropy  at $S$ is semi-simple.  Lemma~\ref{X}  constructed   a  $\mathcal G$-invariant  vector field  $X$, in $U$, of constant strictly  positive $g$-norm.
We  normalize $X$ and suppose that $X$ is of  constant $g$-norm equal  to $1$. This vector field is not unique (only its restriction to $S$ is) and  the following Lemma~\ref{Killing} shows that among all 
$\mathcal G$-invariant vector fields of norm $1$, there exists  exactly one (denoted by $X'$) which is Killing.

In the sequel, we will work only with  the restriction of $X$ to $S$, which we  will still denote  by $X$.

  Recall  that the affine group of the real  line  $Aff$  is the group of transformations of $\RR$, given by $x \to ax+b$, with $a \in \RR^*$ and $b \in \RR$. If $Y$ is the infinitesimal generator of the one parameter group of  homotheties and $H$ the infinitesimal generator of the one parameter group of  translations,  then $\lbrack Y, H \rbrack =H$.

\begin{lemma}  \label{Killing}  (i) The Killing algebra  $\mathcal G$ is isomorphic to that of $\RR \times Aff$. The isotropy corresponds to  the one parameter group of homotheties in $Aff$.

(ii) The vector field $X$ is the restriction to $S$ of a central element $X'$ in $\mathcal G$.

(iii)   The restriction of the Killing algebra to $S$ has, in adapted analytic coordinates $(x,h)$,  the following basis $(-h \frac{\partial}{\partial h}, \frac{\partial}{\partial h},\frac{\partial}{\partial x})$.

(iv) In the previous coordinates, the restriction of $g$ to $S$ is $dx^2$.

\end{lemma}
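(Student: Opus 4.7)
The plan is to extract the Lie-algebra structure of $\mathcal G$ from the isotropy at a single point $s \in S$, build coordinates on $S$ that normalize the Killing fields, and finally read off $g|_S$ by invariance.

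Fix $s \in S$ and let $Y \in \mathcal G$ generate the isotropy at $s$. By Lemma~\ref{X}, the linearization of $Y$ at $s$ is a hyperbolic element of $\mathfrak o(2,1)$ with eigenvalues $0, \mu, -\mu$ on $T_sU$ (with $\mu \neq 0$), having $\langle X(s) \rangle$ as $0$-eigenspace. Because the isotropy action on $T_sS \cong \mathcal G/\langle Y\rangle$ is the quotient of $\mathrm{ad}(Y)$, this forces $\mathcal G = \RR Y \oplus \RR X' \oplus \RR Z$, where $\mathrm{ad}(Y)X' = 0$ and $\mathrm{ad}(Y)Z = \mu Z$, and $X'$ can be chosen with $ev(s)(X') = X(s)$. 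The Jacobi identity yields $[X', Z] = c\,Z$ for some $c \in \RR$; I would replace $X'$ by $X' - (c/\mu)Y$, which preserves $ev(s)(X')$, keeps $X'$ in the $0$-eigenspace, and now makes $[X',Z]=0$ in addition to $[X',Y]=0$. Rescaling so that $\mu = 1$ one obtains $\mathcal G \cong \RR X' \oplus \mathfrak{aff}(\RR)$ with the isotropy $\RR Y$ corresponding to the homothety direction, proving (i). For (ii), the central element $X'$ is Killing and $\mathcal G$-invariant; since $S$ is a $\mathcal G$-orbit, any $\mathcal G$-invariant section of $TU|_S$ is determined by its isotropy-fixed value at $s$, and $X'(s) = X(s)$ forces $X = X'|_S$.

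For (iii), I first observe that the restriction $\mathcal G \to \mathrm{Vect}(S)$ is injective: its kernel is an ideal of $\mathcal G$ contained in the one-dimensional isotropy $\RR Y$, but $\RR Y$ contains no non-zero ideal of $\RR \oplus \mathfrak{aff}(\RR)$. Since $X'$ and $Z$ commute and their values at $s$ (the $0$- and $\mu$-eigenvectors of the isotropy on $T_sS$) are linearly independent, I simultaneously straighten them to $\partial_x$ and $\partial_h$ in local coordinates $(x,h)$ on $S$. The relations $[Y,\partial_x]=0$ and $[Y,\partial_h]=\partial_h$ then force $Y|_S = a\,\partial_x + (b-h)\,\partial_h$ for constants $a,b$; the vanishing $Y(s) = 0$ gives $a = 0$, and a translation of $h$ placing $s$ at $h = 0$ reduces $Y|_S$ to $-h\,\partial_h$.

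For (iv), I write $g|_S = g_{xx}\,dx^2 + 2g_{xh}\,dx\,dh + g_{hh}\,dh^2$. Invariance under the Killing fields $\partial_x$ and $\partial_h$ forces the three coefficients to be constant; the normalization $|X'|_g^2 = 1$ gives $g_{xx}=1$; and $\mathcal L_{-h\,\partial_h}(g|_S) = -2g_{xh}\,dx\,dh - 2g_{hh}\,dh^2$ must vanish, forcing $g_{xh}=g_{hh}=0$, so $g|_S = dx^2$. The main difficulty is the first stage: turning the pointwise semi-simple isotropy data into global bracket relations on $\mathcal G$ and isolating a genuine central $X'$ (rather than merely an element commuting with $Y$); stages (iii) and (iv) are then routine simultaneous straightening of commuting vector fields and a single Lie-derivative computation.
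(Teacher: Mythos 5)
Your parts (ii)--(iv) are sound, but there is a gap at the decisive step of part (i), namely the sentence asserting that the isotropy data ``forces $\mathcal G=\RR Y\oplus\RR X'\oplus\RR Z$ with $\mathrm{ad}(Y)X'=0$, $\mathrm{ad}(Y)Z=\mu Z$ and $ev(s)(X')=X(s)$''. Two things are silently assumed there. First, you assume that the $0$-eigendirection $\RR X(s)$ of the isotropy lies in $T_sS$, i.e.\ that the quotient action on $\mathcal G/\RR Y\cong T_sS$ has eigenvalues $\{0,\mu\}$. A priori the isotropy-invariant plane $T_sS$ could instead be $X(s)^{\perp}$, on which the isotropy acts with eigenvalues $\{\mu,-\mu\}$; running your argument in that case gives $[Y,Z_{\pm}]=\pm\mu Z_{\pm}$ and, after Jacobi, $[Z_{+},Z_{-}]=cY$, so $\mathcal G$ would be $\mathfrak{sl}(2,\RR)$ (if $c\neq 0$) or the unimodular solvable algebra with $\mathrm{ad}(Y)$ of trace zero (if $c=0$). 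These possibilities are excluded only by Lemma~\ref{dim}(iii) (solvability) and Lemma~\ref{3} (non-unimodularity), neither of which you ever invoke; since the conclusion of Lemma~\ref{Killing} genuinely depends on those facts, their absence is a real omission and not a stylistic one (in the paper, tangency of $X(s)$ to $S$ is a \emph{consequence} of the algebra structure, not an input). Second, even granting eigenvalues $\{0,\mu\}$ on the quotient, a lift $X'_0$ of the $0$-eigenvector only satisfies $[Y,X'_0]=dY$ for some $d\in\RR$, and this $d$ cannot be removed by adding multiples of $Y$ or $Z$; you must rule out the Jordan block, e.g.\ by writing $[X'_0,Z]=pY+qX'_0+rZ$ and expanding the Jacobi identity for $(Y,X'_0,Z)$, which forces $d\mu=0$, or by observing that $d\neq 0$ would place a copy of $\mathfrak{aff}(\RR)$ inside $[\mathcal G,\mathcal G]$, contradicting nilpotency of the derived algebra of the solvable $\mathcal G$. (The $\mu$-eigenvector, by contrast, does lift honestly, as you use.)

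Once those two points are supplied, your route works and is genuinely different from the paper's: you read the bracket relations directly off the semisimple isotropy representation, whereas the paper first proves $\dim[\mathcal G,\mathcal G]=1$ (using solvability and the semisimplicity of the isotropy) and then builds the basis $\{Y,X',H\}$. Your remaining steps are correct and essentially the paper's: injectivity of the restriction $\mathcal G\to\mathrm{Vect}(S)$, uniqueness of a $\mathcal G$-invariant field along the orbit $S$ for (ii), simultaneous straightening of the commuting fields plus the relation $[Y,\partial_h]=\partial_h$ for (iii), and the Lie-derivative computation (which is in fact slightly more complete than the paper's argument for $g_{xh}=0$) for (iv).
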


\begin{proof}

 (i)  We show  first that the derivative Lie algebra $\lbrack \mathcal{G}, \mathcal{G}  \rbrack$ is 1-dimensional.

 It is a general fact that the derivative algebra of a solvable Lie algebra is nilpotent~\cite{Kir}. Remark first that $\lbrack \mathcal G, \mathcal G \rbrack \neq 0$. Indeed,   if not $\mathcal G$ is abelian and the action of the isotropy $\mathcal I \subset \mathcal G$ at a point $s \in S$ would be trivial
on $\mathcal G$ and hence on $T_{s}S$, which is identified to $\mathcal G / \mathcal I$. The isotropy action  on the tangent space $T_sS$ being trivial,
this  implies that the isotropy action is trivial on $T_{s} U$ (an element of $O(2,1)$ which acts trivially on a plane in $\RR^3$ is trivial). This implies that the isotropy is trivial at $s \in S$: a contradiction.

As $\mathcal G$ is 3-dimensional, its derivative algebra $\lbrack \mathcal G, \mathcal G \rbrack$
is a nilpotent Lie algebra of dimension $1$ or $2$, hence $\lbrack \mathcal G, \mathcal G \rbrack \simeq \RR$, or $\lbrack \mathcal G, \mathcal G \rbrack \simeq \RR^2$.

Assume, by contradiction, that $\lbrack \mathcal G, \mathcal 
G \rbrack \simeq \RR^2$. \\

  We first  prove  that the isotropy $\mathcal I$ lies in $\lbrack \mathcal G, \mathcal G \rbrack$. Assume, by contradiction, that this is not the case. Then,  $\lbrack \mathcal G, \mathcal G \rbrack \simeq \RR^2$ will act 
  freely and so transitively on $S$, preserving the vector field $X$.  In particular, $X$ is the restriction to $S$ of a Killing vector  field $X' \in \lbrack \mathcal G, \mathcal G \rbrack$.  

Let $Y$ be a generator of the isotropy at $s \in S$. Since $X$ is fixed by the isotropy, one gets, in restriction to $S$,  the following Lie bracket relation: $\lbrack Y, X' \rbrack = \lbrack Y,X \rbrack=aY$,
for some $a \in \RR$. On the other hand, $Y$ is supposed  not  to belong to $\lbrack \mathcal G, \mathcal G \rbrack$, meaning that $a=0$.

This  implies that $X'$ is a central element in $\mathcal G$. In particular, $\lbrack \mathcal G, \mathcal G \rbrack$
is one-dimensional: a contradiction.\\

 Hence  $\mathcal I \subset \lbrack \mathcal G, \mathcal G \rbrack$.

           Let $Y$ be a generator of $\mathcal I$, $\{Y,X'  \}$ be generators of $\lbrack \mathcal G, \mathcal G \rbrack$ and   $\{Y,X' ,Z \}$      be a basis   of $\mathcal G$. The tangent
           space of $S$, at some point $s  \in S$, is identified with $\mathcal G /  \mathcal I$ and the infinitesimal  (isotropic) action of $Y$ on this tangent space is given      in the basis
           $\{ X', Z \}$ by the matrix $ad(Y) = \left(  \begin{array}{cc}
                                                                 0  &   *\\
                                                                 0     &  0\\
                                                                 
                                                                 \end{array}  \right) $. This is because $\lbrack \mathcal G, \mathcal G \rbrack  \simeq \RR^2$ and $ad(Y)(\mathcal G) \subset \lbrack \mathcal G, \mathcal G \rbrack.$
                        Moreover, $ad(Y) \neq 0$, since the restriction to the isotropy action to the tangent space of $S$ is injective. 
                        
                        From this form of $ad(Y)$,  we see that the isotropy is unipotent with fixed direction $\RR X'$: a contradiction.\\
                        
                        We proved  that $\lbrack \mathcal{G}, \mathcal{G}  \rbrack$ is 1-dimensional. Notice that $\mathcal I \neq \lbrack \mathcal{G}, \mathcal{G}  \rbrack$. Indeed, if one assume the contrary, then the action of the isotropy on the tangent space $T_sU$ at $s \in S$ is trivial: a contradiction.

  Let $H$ be a generator of $\lbrack \mathcal G, \mathcal G \rbrack$. If $Y$ is (still) the generator of $\mathcal I$, it follows that $\lbrack Y, H \rbrack =aH$, with $a \in \RR$.
  
  Assume, by contradiction, that $a=0$. Then the image of $ad(Y)$ (which  lies in $\lbrack \mathcal G, \mathcal G \rbrack$) belongs to   the kernel of $ad(Y)$: a contradiction (since the isotropy is semi-simple).
  
  Therefore $a \neq 0$ and we can assume, by changing  the generator $Y$ of the isotropy, that $a=1$. We have  $\lbrack Y, H \rbrack =H$. 
  
  Let $X' \in \mathcal G$ such that  $\{X' ,H \}$ span the kernel of $ad(H)$. Then $\{ Y, X', H \}$ is a basis of
$\mathcal G$. We also have $\lbrack X', Y\rbrack =bH$, with $b \in \RR$. After replacing 
 $X'$ by $X' +bH$,  we can assume  $\lbrack X', Y\rbrack =0$.

 It follows that $\mathcal G$ is the Lie algebra   $\RR \oplus \mathfrak{aff}(\mathbf{R})$,  where $\mathfrak{aff}(\mathbf{R})$ is  the Lie algebra of the affine group of the real line. The Killing field $X'$ span the center, the isotropy $Y$ span the one parameter group of the homotheties  and $H$ span the one parameter group of translations in the affine group.

(ii) This comes from the fact that  $X$ is the unique vector field tangent to $S$ invariant by the isotropy.

(iii) The commuting  Killing vector fields $X'$ and $H$ are nonsingular  on $S$. This implies that, in adapted  coordinates $(x,h)$ on $S$, we get  $H= \frac{\partial}{\partial h}$ and $X=\frac{\partial}{\partial x}$. The isotropy  preserves $X$. It follows  that, in restriction to $S$,  the expression of  $Y$ is $f(h)\frac{\partial}{\partial h}$, with $f$ an analytic function vanishing at the origin. The Lie bracket relation $\lbrack Y, H \rbrack =H$ reads

$$\lbrack f(h) \frac{\partial}{\partial h},\frac{\partial}{\partial h} \rbrack = \frac{\partial}{\partial h},$$

and leads to $f(h)=-h$.  Therefore, the isotropy $Y$ is linear: $-h \frac{\partial}{\partial h}$.

(iv) Since  $H= \frac{\partial}{\partial h}$ and $X=\frac{\partial}{\partial x}$ are Killing fields, the restriction of $g$  to $S$ admits constant coefficients with respect to  the coordinates $(x,h).$ Since 
$H$ is expanded by the isotropy, it follows that $H$ is of constant $g$-norm equal to $0$. On the other hand, $X$ is of constant $g$-norm equal to one. It follows that the expression of  $g$ on $S$  is $dx^2$.

\end{proof}

\begin{lemma} \label{normal form} In adapted analytic  coordinates $(x,h,z)$,  in the neighborhood of the origin, $$g=dx^2+ dhdz+Cz^2dh^2+Dzdxdh,$$ with $C$ and $D$ real numbers.

Moreover, in these coordinates, $\frac{\partial}{\partial x}$, $\frac{\partial}{\partial h}$ and $-h\frac{\partial}{\partial h}+z\frac{\partial}{\partial z}$ are Killing fields.
\end{lemma}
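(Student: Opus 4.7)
The plan is to construct adapted analytic coordinates in which the three Killing fields $X'$, $H$, $Y$ from Lemma~\ref{Killing} take the model forms $\indel{x}$, $\indel{h}$, $-h\indel{h}+z\indel{z}$, and then to exploit the $\mathcal G$-invariance of $g$ to solve directly for its components. First, since $X'$ is central in $\mathcal G$, the Killing fields $X'$ and $H$ commute, so I pick a real analytic arc $z\mapsto \gamma(z)$ through the origin transverse to $S$ and define
$$\Phi(x,h,z) \;=\; \phi^{X'}_x\circ \phi^{H}_h(\gamma(z)),$$
where $\phi^{V}_t$ is the time-$t$ flow of $V$. Because $[X',H]=0$, $\Phi$ is a well defined local analytic diffeomorphism, and tautologically $X'=\indel{x}$, $H=\indel{h}$ in these coordinates; at $z=0$ these coordinates $(x,h)$ coincide with those of Lemma~\ref{Killing}(iii).

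Next I would normalize $Y$. The Lie bracket relations $[Y,X']=0$ and $[Y,H]=H$ force
$$Y \;=\; a(z)\indel{x} \;+\; (-h+b_0(z))\indel{h} \;+\; c(z)\indel{z},$$
and Lemma~\ref{Killing}(iii) gives $a(0)=b_0(0)=c(0)=0$. Because the isotropy at $0$ is semi-simple in $O(2,1)$, whose Lie algebra is traceless, and the eigenvalues of $D_0 Y$ along $T_0S$ are already $0$ (on $X'$) and $-1$ (on $H$), the third eigenvalue $c'(0)$ must equal $+1$. A reparametrization $\tilde z=\psi(z)$ solving $\psi'/\psi = 1/c$ is analytic at $0$ because $c(z)=z(1+O(z))$, and makes $c(z)$ become $z$. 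Subsequent shifts $\tilde x=x+\alpha(z)$, $\tilde h=h+\beta(z)$ with $\alpha'(z)=-a(z)/z$ and $(z\beta)'=-b_0(z)$ are analytic since $a(0)=b_0(0)=0$; they kill the remaining $a$ and $b_0$ while preserving the forms of $X'$ and $H$. The outcome is $Y=-h\indel{h}+z\indel{z}$.

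With these coordinates in hand, write $g=\sum g_{ij}(x,h,z)\,dx^i dx^j$. The Killing equations for $X'=\indel{x}$ and $H=\indel{h}$ force $g_{ij}=g_{ij}(z)$ only. Then $\mathcal L_Y g=0$ decouples into the elementary ODEs
$$g_{11}'=0,\quad zg_{12}'=g_{12},\quad (zg_{13})'=0,\quad zg_{22}'=2g_{22},\quad g_{23}'=0,\quad (z^2 g_{33})'=0.$$
Imposing analyticity at $z=0$ forces $g_{13}=g_{33}=0$, and solving the remaining equations yields
$$g \;=\; g_{11}(0)\,dx^2 + 2C_{12}z\,dx\,dh + C_{22}z^2\,dh^2 + 2C_{23}\,dh\,dz.$$
Lemma~\ref{Killing}(iv) gives $g_{11}(0)=1$, nondegeneracy of $g$ at the origin forces $C_{23}\neq 0$, and a linear rescaling $h\mapsto\lambda h$, $z\mapsto\mu z$ with $\lambda\mu=1/(2C_{23})$ normalizes $2C_{23}$ to $1$ while leaving $\indel{x}$, $\indel{h}$ and $-h\indel{h}+z\indel{z}$ intact. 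Renaming the two remaining constants produces the asserted form $g=dx^2+dh\,dz+Cz^2\,dh^2+Dz\,dx\,dh$.

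The main technical obstacle is the normalization of $Y$: one must verify that the third eigenvalue of $D_0 Y$ is exactly $+1$ (which comes from $\mathfrak o(2,1)\subset\mathfrak{sl}(3,\RR)$ together with Lemma~\ref{Killing}(iii)) and that all three coordinate changes $\psi,\alpha,\beta$ are genuinely analytic at $z=0$; once $Y$ is in model form the metric calculation is essentially mechanical.
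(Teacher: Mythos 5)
Your argument is correct, and it reaches the normal form by a route that differs from the paper's in how the transverse direction is handled. The paper builds the third coordinate out of a canonical geometric object: the isotropic vector field $Z$ along $S$ determined by $g(Z,Z)=0$, $g(X,Z)=0$, $g(H,Z)=1$, extended by its geodesic flow; straightening $(X',H,Z)$ then makes the conditions $g_{13}=g_{33}=0$, $g_{23}=1$ hold by construction (scalar products of a geodesic field with Killing fields are constant along its orbits), the relation $\lbrack Y,Z\rbrack=-Z$ is obtained geometrically (the isotropy must contract $Z$ at the rate it expands $H$ because $g(H,Z)=1$), and only the $dh^2$ and $dx\,dh$ coefficients remain to be pinned down by $Y$-invariance. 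You instead start from an arbitrary transverse arc, normalize $Y$ to $-h\indel{h}+z\indel{z}$ by hand — getting the transverse eigenvalue $+1$ from tracelessness of the isotropy representation in $\mathfrak{o}(2,1)$ (a clean substitute for the paper's geometric argument; semisimplicity is not even needed there) and checking analyticity of the reparametrization and shear corrections, which works because $a(0)=b_0(0)=c(0)=0$ and $c'(0)=1$ — and then let the full Killing ODE system $\mathcal{L}_Yg=0$, together with analyticity of the Euler-type equations at $z=0$, force $g_{13}=g_{33}=0$, $g_{12}\sim z$, $g_{22}\sim z^2$ and constancy of $g_{11},g_{23}$, finishing with a linear rescaling of $(h,z)$ to normalize the $dh\,dz$ coefficient (harmless, since it only rescales $\indel{h}$ by a constant and fixes $-h\indel{h}+z\indel{z}$). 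What the paper's construction buys is that no rescaling or extra normalization of $Y$ is needed and the vanishing of the cross terms has a transparent geometric cause; what your version buys is that it avoids the geodesic-flow construction altogether and makes the determination of the metric a purely mechanical consequence of the three model Killing fields plus analyticity, which also makes transparent why no further free functions can appear.
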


\begin{proof} 



Let us consider the commuting Killing vector fields $X'$ and $H$, constructed  in  Lemma~\ref{Killing}. Their restrictions to $S$ have  the expressions $H= \frac{\partial}{\partial h}$ and $X=\frac{\partial}{\partial x}$. Recall that $H$ is of constant $g$-norm equal to $0$
and $X$ is of constant $g$-norm equal to one. Point (iv) in Lemma~\ref{Killing} also  shows that $g(X,H)=0$ on $S$. Moreover, being central, $X'$ is of constant $g$-norm equal to one in  all of $U$.

We define the geodesic vector field $Z$ as follows. At each point in $S$, there exists a unique vector $Z$ (transverse to S) such that $g(Z,Z)=0, g(X,Z)=0, g(H,Z)=1$. In fact, $Z$ spans the second isotropic line
(other than that generated by $H$)   in  $X^{\bot}$. On this line $Z$ is uniquely determined by the relation $g(H,Z)=1$.  Now $Z$  uniquely extends  in the neighborhood of the origin to a geodesic vector field.

The image of $S$ through the  geodesic flow of $Z$ defines a foliation by surfaces. Each leaf is given by $exp_S(zZ)$, for some $z$, small enough. The leaf $S$ corresponds to $z=0$.

 Since $X'$ and $H$ are Killing, then $Z$ commutes with both $X'$ and $H$. Let $(x,h,z)$ be analytic coordinates in the neighborhood of the origin such that $X'=\frac{\partial}{\partial x}, H=\frac{\partial}{\partial h}, Z=\frac{\partial}{\partial z}$.

As the scalar product between the  geodesic vector field $Z$ and the  Killing vector field $X'$ is constant along the orbits of $Z$, the invariance by the commutative Killing algebra generated by
$X'$ and $H$, implies that $dxdz=0$ and $dhdz=1$. Also the coefficients of $dh^2$ and $dxdh$ depend only on $z$ (since $H$ and $X'$ are Killing fields).

We get that $g=dx^2+ dhdz+g(z)dh^2+f(z)dxdh$, with $f,g$ analytic functions which are both  constant equal to $0$ on $S$ (f(0)=g(0)=0).

We need also to write down the invariance of $g$  by the isotropy $\RR Y$.  Remember  that the Lie bracket relations in $\mathcal G$  are $\lbrack Y,X' \rbrack =0$ and $\lbrack Y,H \rbrack=H$. Since the isotropy preserves $X$, it must  preserve also the two isotropic directions of $X^{\bot}$. Moreover, since $g(H,Z)=1$, the isotropy must expand $H$ and contract  $Z$ at the same rate. 
This implies the Lie bracket relation $ad(Y)\cdot Z=\lbrack Y, Z \rbrack =-Z$.

Now, since $Y$ and $X'$ commute, the general expression for $Y$ is $u(h,z) \frac{\partial}{\partial h} + v(h,z) \frac{\partial}{\partial z}+t(h,z)\frac{\partial}{\partial x}$, with $u,v$ and $t$ analytic functions
vanishing at the origin.

The other Lie bracket relations read $ \lbrack u(h,z) \frac{\partial}{\partial h} + v(h,z) \frac{\partial}{\partial z}+t(h,z)\frac{\partial}{\partial x}, \frac{\partial}{\partial h} \rbrack =\frac{\partial}{\partial h} $ and

 $ \lbrack u(h,z) \frac{\partial}{\partial h} + v(h,z) \frac{\partial}{\partial z}+t(h,z)\frac{\partial}{\partial x}, \frac{\partial}{\partial z} \rbrack =-\frac{\partial}{\partial z} $.
 
 The first Lie bracket relation   leads to the following equations:
 $ \frac{\partial u}{\partial h}=-1$,  $\ \frac{\partial v}{\partial h}=0$,  $ \frac{\partial t}{\partial h}=0$.
 
 The second one leads to  $\frac{\partial u}{\partial z}=0$,  $\frac{\partial v}{\partial z}=1$,  $\frac{\partial t}{\partial z}=0$.
 
 We get $u(h,z)=-h$, $v(h,z)=z$, $t(h,z)=0$.

Hence, in our  coordinates,
$Y=-h\frac{\partial}{\partial h}+z\frac{\partial}{\partial z}$. The invariance of $g$ under the action of this linear vector field implies $g(e^{-t}z) e^{2t}=g(z)$ and
$f(e^{-t}z)e^t=f(z)$, for all $t \in \RR$. This implies then that $f(z)=Cz$ and $g(z)=Dz^2$, with $C, D$ real constants.
\end{proof}

\subsection{Computation of the Killing algebra}

We need to understand now if all the  metrics $$g_{C,D}=dx^2+ dhdz+Cz^2dh^2+Dzdxdh$$ constructed in Lemma~\ref{normal form} are quasihomogeneous. In other words, do the metrics in this family
 admit other Killing fields than $\frac{\partial}{\partial x}$, $\frac{\partial}{\partial h}$ and $h\frac{\partial}{\partial h}-z\frac{\partial}{\partial z}$ ?

In this section we compute the full Killing algebra of $g_{C,D}$ and prove Proposition~\ref{second}.
In particular, we obtain that the metrics $g_{C,D}=dx^2+ dhdz+Cz^2dh^2+Dzdxdh$ admit extra Killing fields and are locally homogeneous, unless $C \neq 0$ and $D =0$.

  Using the formula~\cite{kobayashi-nomizu}
  
   $$( L_{T}g_{C,D})(\indel{x_i}, \indel{x_j})=T \cdot g_{C,D}(\indel{x_i}, \indel{x_j}) + g_{C,D}(\lbrack \indel{x_i}, T \rbrack, \indel{x_j})+g_{C,D}(\indel{x_i}, \lbrack \indel{x_j}, T \rbrack)$$
   
   one gets the following PDE system for the coefficients of  a Killing field $T=\alpha \indel{x} + \beta \indel{h} + \gamma \indel{z}$.

\begin{eqnarray}
\label{primera} 0 & = & \beta_{z},\\
\label{segunda} 0 & = & \alpha_{x}+Dz  \beta_{x},\\
\label{tercera} 0 & = & \beta_{x}+Dz \beta_z+ \alpha_z,\\
\label{cuarta} 0 & = & \gamma D+Dz \alpha_x+Cz^2 \beta_x+ \gamma_x+ \alpha_h +Dz \beta_h,\\
\label{quinta} 0 & = & \beta_{h}+ Cz^2 \beta_z+Dz \alpha_z+ \gamma_z,\\
\label{sexta} 0 & = & zC \gamma +Cz^2 \beta_h + Dz \alpha_h+ \gamma_h.
\end{eqnarray}
These correspond, in the equation satisfied by $T$, to the couples

 $(\indel{x_i}, \indel{x_j})\in\left\{\left(\indel{z},\indel{z}\right),\left(\indel{x},\indel{x}\right),\left(\indel{x},\indel{z}\right), \left(\indel{x},\indel{h}\right),
\left(\indel{h},\indel{z}\right), \left(\indel{h},\indel{h}\right) \right\}$.

We prove now the final part of Proposition~\ref{second}:

\begin{proposition}

(i) If $C=0$ and $D \neq 0$, the Killing algebra of $g_{C,D}$  is solvable 
of dimension $4$ and $g$ is locally homogeneous.

(ii) If $C \neq 0$ and $D \neq 0$, then $g_{C,D}$ is a left invariant metric on $SL(2, \RR)$ and its  Killing algebra   is $\RR \oplus sl(2, \RR)$.

(iii) The Killing algebra of $g_{C,D}$  is of dimension three (and $g_{C,D}$ is quasihomogeneous) if and only if $C \neq 0$ and $D =0$.

(iv) If $C=0$ and $D=0$, $g_{C,D}$ is flat and its  Killing Lie algebra is of dimension six.
\end{proposition}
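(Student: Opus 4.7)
The plan is to integrate the linear PDE system (\ref{primera})--(\ref{sexta}) directly for the components $(\alpha,\beta,\gamma)$ of a general Killing field $T$ of $g_{C,D}$, and then enumerate the surviving integration constants in each of the four subcases.

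First I would peel off the easier equations. Equation (\ref{primera}) gives $\beta=\beta(x,h)$; combining (\ref{segunda}) with (\ref{tercera}) yields $\alpha=-z\beta_x+a(h)$ and forces the ODE $\beta_{xx}=D\beta_x$, whose solutions are $\beta(x,h)=A(h)x+B(h)$ when $D=0$ and $\beta(x,h)=(\lambda(h)/D)e^{Dx}+\mu(h)$ when $D\neq 0$. Equation (\ref{quinta}) then integrates in $z$ to give $\gamma=-z\beta_h+(D/2)z^2\beta_x+\tilde\gamma(x,h)$ for a new function $\tilde\gamma$.

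Substituting these expressions into the remaining equations (\ref{cuarta}) and (\ref{sexta}) turns each into a polynomial identity in $z$ whose coefficients may also involve $e^{Dx}$. Matching monomials gives a finite cascade of ODE and algebraic constraints on $a,\tilde\gamma$ and either $A,B$ or $\lambda,\mu$, whose pattern depends sharply on $(C,D)$: when $CD\neq 0$ the $z^3$-coefficient of (\ref{sexta}) forces $\lambda\equiv 0$, eliminating the $e^{Dx}$ mode; when $C\neq 0,D=0$ the $z^2$-coefficient of (\ref{cuarta}) forces $A\equiv 0$; when $C=0,D\neq 0$ the analogous condition reduces merely to $\lambda'(h)=0$, leaving $\lambda$ as a free constant responsible for the extra Killing field $e^{-Dx}\indel{z}$; and when $C=D=0$ all such constraints collapse, leaving the six parameters of the flat Lorentz isometry algebra and proving (iv).

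In each remaining case I would enumerate the surviving integration constants and thereby read off the claimed Killing algebra. For (ii) I would moreover compute the Lie brackets of the four resulting fields to identify $\mathcal G$ with $\RR\oplus sl(2,\RR)$, the central $\RR$ being generated by $\indel{x}$; the identification of $g_{C,D}$ with a left-invariant metric on $SL(2,\RR)$ is then carried out by exhibiting a coframe dual to the three $sl(2,\RR)$-Killing fields in which $g_{C,D}$ has constant coefficients. The main technical difficulty is the bookkeeping in the middle step: one must separate terms containing $e^{Dx}$ from polynomial-in-$z$ terms in (\ref{cuarta}) and (\ref{sexta}) and track how the highest-order coefficients force the vanishing of $\lambda$ or $A$ before the lower-order cascade, which fixes $a,\tilde\gamma$ and the second derivatives of $B$ or $\mu$, can be solved.
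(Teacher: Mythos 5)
Your overall plan --- integrating (\ref{primera})--(\ref{sexta}) outright and counting surviving constants --- is a genuinely different route from the paper's: the paper never solves the system in cases (i), (iii), (iv) (it verifies one guessed field for (i), derives a special ansatz for the extra field in (ii) from Lie-algebraic constraints on a hypothetical fourth generator, and settles (iii)--(iv) by a curvature computation). Your opening steps are also correct: $\beta=\beta(x,h)$, $\alpha=-z\beta_x+a(h)$ with $\beta_{xx}=D\beta_x$, $\gamma=-z\beta_h+\frac{D}{2}z^2\beta_x+\tilde\gamma(x,h)$, and the coefficients you point to do force $\lambda\equiv 0$ when $CD\neq 0$ and $A\equiv 0$ when $C\neq 0$, $D=0$.

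The gap is in the announced outcome of the cascade, and it is fatal to the plan as stated. Take $C\neq 0$, $D=0$: after $A\equiv 0$, equation (\ref{cuarta}) gives $\tilde\gamma=-a'(h)x+w(h)$, and equation (\ref{sexta}) forces only $a'=0$, $w'=0$ and $B''=Cw$; the constant $w$ is \emph{not} killed, and it produces a fourth solution $W=\frac{C}{2}h^2\,\indel{h}+(1-Czh)\,\indel{z}$ of the full system. One checks directly (in the normalization the paper's system and inverse matrix use, $g_{xx}=1$, $g_{hz}=1$, $g_{xh}=Dz$, $g_{hh}=Cz^2$) that $L_W g_{C,0}=0$, and the brackets are $[\,h\indel{h}-z\indel{z},W]=W$, $[\indel{h},W]=C\,(h\indel{h}-z\indel{z})$, so $\langle \indel{x},\indel{h},h\indel{h}-z\indel{z},W\rangle\simeq\RR\oplus sl(2,\RR)$; since $W(0)=\indel{z}$, this algebra is transitive at the origin. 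Thus a correct execution of your method \emph{refutes} claim (iii) (and the quasihomogeneity of $g_{C,0}$) instead of proving it. Likewise, for $C=0$, $D\neq 0$ your bookkeeping is off twice: $e^{-Dx}\indel{z}$ comes from the homogeneous solution of $\tilde\gamma_x+D\tilde\gamma=-a'(h)$, not from $\lambda$, and the cascade actually leaves six constants; besides the paper's four fields one also finds $h\indel{x}+\frac{D}{2}h^2\indel{h}-\bigl(Dzh+\frac{1}{D}\bigr)\indel{z}$ and $e^{Dx}\bigl(-Dz\,\indel{x}+\indel{h}+\frac{D^2}{2}z^2\,\indel{z}\bigr)$, so $g_{0,D}$ has a six-dimensional Killing algebra (hence constant sectional curvature), contradicting claim (i). Only (ii) and (iv) come out as you predict; for $CD\neq 0$ the four constants reproduce exactly the paper's field $T$. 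The mismatch is not your method but the statement and the paper's own argument: the proof of (iii) rests on the curvature of $g_{C,0}$ vanishing exactly on $z=0$, whereas $R(\indel{h},\indel{z})\indel{h}=C\,\indel{h}-C^2z^2\,\indel{z}$ is nonzero on $z=0$ (the paper's list of curvature components is incomplete). So before investing in the middle-step bookkeeping, be aware that your route, carried out correctly, produces the extra Killing fields above; a writeup must either find an error in them or acknowledge that parts (i) and (iii) fail as stated.
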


\begin{proof}  (i)  If $C=0$,  one directly checks that $\alpha =\beta =0$ and $\gamma=e^{-Dx}$ is a solution of the PDE system, meaning that
 $e^{-Dx} \indel{z}$ is an extra Killing field. In this case the Killing algebra is  of dimension $4$, generated by $\langle \indel{x}, \indel{h},h\indel{h}-z\indel{z}, e^{-Dx} \indel{z}  \rangle.$
Indeed, Lemma~\ref{dim} shows that the Killing Lie algebra cannot be  bigger,  since $g_{C,D}$ is not of constant sectional curvature, except for $C=D=0$ (see computations at point (iii)). Observe that
the Killing algebra is solvable and transitive. The Lorentz metrics $g_{0,D}$ are locally homogeneous.

(ii) Assume that   there exists an extra Killing field $T$. Then by Lemma~\ref{dim}, $g_{C,D}$ is locally homogeneous and the full  Killing Lie algebra $\mathcal G$  is of dimension four (except for $C=D=0$, for which $g_{C,D}$
 is flat and the Killing algebra is of dimension $6$), generated by $(X',Y,H,T)$.
 
 Since  the isotropy $\RR Y$ fixes $X$ and expands the direction $\RR H$ (because of the relation $\lbrack Y, H \rbrack =H$), we can choose
             as fourth generator  $T$ of $\mathcal G$,  at the origin,  a generator  of the second isotropic direction of the Lorentz plane $X^{\bot}$. Then we will have $\lbrack Y, T \rbrack =-T + aY$, for some
             constant $a \in \RR$ and we can replace $T$ with  $T -aY$ in order to get $\lbrack Y,T \rbrack =-T$.
         
        In the following, we  assume that $\lbrack Y,T \rbrack =-T$.
       
             We will first  show that       necessarily        we have
              $\lbrack H,T \rbrack =aX'- bY$, for some  $a,b \in \RR$.
     
     For this, we use the Jacobi relation  $\lbrack Y, \lbrack T, H \rbrack \rbrack=  \lbrack  \lbrack Y, T \rbrack , H \rbrack +  \lbrack T, \lbrack Y, H \rbrack \rbrack =
     \lbrack -T, H \rbrack + \lbrack T, H \rbrack =0$, to get that $ \lbrack T, H \rbrack $ commutes with  $Y$ and, consequently, lies in  $\RR Y \oplus \RR X'$.
     
     Observe also    that  $X'$ et $Y$ commute, and thus  $T$ (which is an eigenvector of  $ad(Y)$), is also  an eigenvector of $ad(X')$. This gives
     $\lbrack T, X' \rbrack= c T$, for some  $c \in \RR$.
     
     We will construct  a Killing field $T = \alpha \indel{x} + \beta \indel {h} + \gamma \indel{z}$,  such that  $c=0$ (meaning that $X'$ is central in $\mathcal G$) and $b \neq 0$ ($\mathcal G$ semi-simple).

 It follows that  $\lbrack H,T \rbrack =aX'- bY$, with  $a,b \in \RR$ and $b \neq 0$. The Lie algebra $\mathcal L$  generated by $aX'-bY,H,T$ is $sl(2,\RR)$. 
  
  We also assume that $a \neq 0$, which implies that $\mathcal L$ does not contain the isotropy and hence acts simply  transitively.   
  
  Since $T$ and $X'$ commute, the coefficients $\alpha, \beta$ and $\gamma$ do not depend on $x$.
  
  The Lie bracket relation $\lbrack H, T\rbrack =aX' -bY$ reads $\lbrack \indel{h}, T \rbrack =a \indel{x} + b(h \indel{h}-z \indel{z})$.
  
  This leads to $\alpha_h=a, \beta_h=bh, \gamma_h=-bz$.
  
  It follows that $\alpha=ah+t(z)$, $\beta=\frac{1}{2}bh^2$ and $\gamma=-bz h +s(z)$, with $t,s$ analytic functions of $z$.
  
  We check equation~\ref{quinta} which gives  $bh+Dzt'(z)+s'(z)-bh=0$,  or equivalently  $Dzt'(z) + s'(z)=0$.
  
  Equation~\ref{cuarta} leads to $\lbrack -bzh +s(z) \rbrack D +a +Dz bh$, which simplifies in 
 $Ds(z)+a=0$. 
  
  Since here  $D \neq 0$, we get  $s(z)=-\frac{a}{D}$. It follows that $Dzt'(z)=0$, and,
  since $D \neq 0$, $t'(z)=0$, and $t$ is a constant. We can assume that $t=0$ (since $X'=\indel{x}$ is a Killing field).
  
  Equation~\ref{sexta} leads to
$  -zC \lbrack bzh  + \frac{a}{D} \rbrack +Cz^2bh+Dza -bz=0$, 
which gives

$a(D - \frac{C}{D})=b$.

The extra Killling field is  $T=ah \indel{x}+ \frac{1}{2}bh^2 \indel{h} +(-bzh- \frac{a}{D}) \indel{z}$, with $a,b$ such that $a(D - \frac{C}{D})=b$.

Indeed, since $\alpha$ and $\beta$  depend only on $h$, then  $T$ also satisfies equations~\ref{primera},~\ref{segunda} and~\ref{tercera}. It is a Killing field of
$g_{C,D}$.

Consequently, $g_{C,D}$ is locally isomorphic to a left invariant metric on $SL(2, \RR)$. The isotropy is diagonally embedded in
  $\RR \oplus sl(2, \RR)$.
 This terminates the proof of point (ii).

(iii). We give here a description of the curvature of  the Lorentz metrics $g_{C,D}$.

The formulas one needs to use for the  computation are classical and  can be found in~\cite{Wolf}.  In the sequel,  the local coordinates  $(x,h,z)$ are denoted by $(x_1, x_2, x_3)$.

First one computes Christoffel coefficients $\Gamma^m_{ij}$ using the formula

$$\Gamma^m_{ij}= \frac{1}{2} \sum_k  ( \frac{\partial g_{jk}}{\partial x_i} +   \frac{\partial g_{ki}}{\partial x_j}   - \frac{\partial g_{ij}}{\partial x_k} )g^{km}.$$

Here $(g^{ij})$,  the inverse of the matrix $(g_{ij})$ is the following: $\left(  \begin{array}{ccc}
                                                                 1  &  0 & -Dz \\
                                                                 0   &  0 & 1 \\
                                                                 -Dz   &  1 & (D^2-C)z^2
                                                                 \end{array}  \right).$

For our metrics  $g_{C,D}$, only two derivates are nontrivial: $\frac{\partial g_{22}}{\partial x_3}=2zC$ and $\frac{\partial g_{12}}{\partial x_3}=D.$

We get $\Gamma^m_{12}=-\frac{1}{2}Dg^{3m}$,  $\Gamma^m_{13}=\frac{1}{2}g^{2m}\frac{\partial g_{21}}{\partial x_3}=\frac{1}{2}Dg^{2m}$, $\Gamma^m_{22}=-\frac{1}{2} g^{3m} \frac{\partial g_{22}}{\partial x_3}=-Czg^{3m}$ and $\Gamma^m_{23}=\frac{1}{2}g^{1m} \frac{\partial g_{12}}{\partial x_3}+\frac{1}{2}g^{2m}\frac{\partial g_{22}}{\partial x_3}=\frac{1}{2} Dg^{1m}+zCg^{2m}$, for $m \in \{1,2,3 \}$

A straightforward computation  leads to $\Gamma^{1}_{12}=D^2 \frac{z}{2}$, $\Gamma^2_{12}=-\frac{1}{2}D$, $\Gamma_{12}^3=-\frac{1}{2}D(D^2-C)z^2$, $\Gamma^1_{13}=\Gamma^2_{13}=0$, $\Gamma^3_{13}=\frac{1}{2}D$, $\Gamma^1_{23}= \frac{1}{2}D$, $\Gamma^2_{23}=0$, $\Gamma^3_{23}=(C-\frac{D^2}{2})z$, $\Gamma^1_{22}=CDz^2$, $\Gamma^2_{22}=-zC$, $\Gamma^3_{22}=C(C-D^2)z^3$, $\Gamma^m_{11}=0$, $\Gamma^m_{33}=0$.

Starting with Christoffel symbols we compute the curvature components using the standard formula:

$$R^s_{ijk}=\sum_l \Gamma^l_{ik} \Gamma^s_{jl} - \sum_l \Gamma^l_{jk} \Gamma^s_{il}  + \frac{\partial \Gamma^s_{ik}}{\partial x_j} -  \frac{\partial \Gamma^s_{jk}}{\partial x_i} .$$

The  straightforward computation of the components of the  curvature of the metrics $g_{C,D}$  leads to the following: 

$R^2_{121}=-\frac{1}{4}D^2, R^3_{121}=0, R^3_{131}=-\frac{1}{4}D^2, R^2_{131}=0, R^3_{132}=( \frac{3}{4}D^3-2CD)z, R^3_{232}=(-\frac{5}{4}CD^2+C^2)z^2,
R^3_{122}=0$.

This shows that for $C\neq 0$ and $D = 0$, {\it the curvature tensor  of $g_{C,D}$ vanishes exactly on the surface $z=0$.} This implies  that the Lorentz metrics $g_{C,0}$, 
with $C \neq 0$ {\it are not locally homogeneous}. They are quasihomogeneous.

Notice also that all Christoffel symbols  $\Gamma^3_{ij}$, with $i,j \in \{1,2 \}$,  vanish on  $z=0$. Consequently, $S$ is totally geodesic.

(iv) The previous computations show that the Lorentz metric $g_{C,D}$ is flat  if and only if $C=D=0$.
\end{proof}

\bibliography{references}

\begin{thebibliography}{PTV96}

\bibitem[Amo79]{Amores}
A.~M. Amores.
\newblock Vector fields of a finite type {$G$}-structure.
\newblock {\em J. Differential Geom.}, 14(1):1--6 (1980), 1979.

\bibitem[Ben97]{Benoist2}
Yves Benoist.
\newblock Orbites des structures rigides (d'apr\`es {M}. {G}romov).
\newblock In {\em Integrable systems and foliations/{F}euilletages et
  syst\`emes int\'egrables ({M}ontpellier, 1995)}, volume 145 of {\em Progr.
  Math.}, pages 1--17. Birkh\"auser Boston, Boston, MA, 1997.

\bibitem[BF05]{BF}
E.~Jerome Benveniste and David Fisher.
\newblock Nonexistence of invariant rigid structures and invariant almost rigid
  structures.
\newblock {\em Comm. Anal. Geom.}, 13(1):89--111, 2005.

\bibitem[BFL92]{BFL}
Yves Benoist, Patrick Foulon, and Fran{\c{c}}ois Labourie.
\newblock Flots d'{A}nosov \`a distributions stable et instable
  diff\'erentiables.
\newblock {\em J. Amer. Math. Soc.}, 5(1):33--74, 1992.

\bibitem[DG91]{DG}
G.~D'Ambra and M.~Gromov.
\newblock Lectures on transformation groups: geometry and dynamics.
\newblock In {\em Surveys in differential geometry ({C}ambridge, {MA}, 1990)},
  pages 19--111. Lehigh Univ., Bethlehem, PA, 1991.

\bibitem[DG13]{dumiguillot}
Sorin Dumitrescu and Adolfo Guillot.
\newblock Quasihomogeneous real analytic connections on surfaces.
\newblock {\em J. Topol Anal.}, 5(4):491--532, 2013.

\bibitem[Dum08]{Dumitrescu}
Sorin Dumitrescu.
\newblock Dynamique du pseudo-groupe des isom\'etries locales sur une
  vari\'et\'e lorentzienne analytique de dimension 3.
\newblock {\em Ergodic Theory Dynam. Systems}, 28(4):1091--1116, 2008.

\bibitem[DZ10]{dumzeg}
Sorin Dumitrescu and Abdelghani Zeghib.
\newblock G\'eom\'etries lorentziennes de dimension trois : classification et
  compl\'etude.
\newblock {\em Geom Dedicata}, 149:243--273, 2010.

\bibitem[Fer02]{Feres}
Renato Feres.
\newblock Rigid geometric structures and actions of semisimple {L}ie groups.
\newblock In {\em Rigidit\'e, groupe fondamental et dynamique}, volume~13 of
  {\em Panor. Synth\`eses}, pages 121--167. Soc. Math. France, Paris, 2002.

\bibitem[Gro88]{Gro}
Michael Gromov.
\newblock Rigid transformations groups.
\newblock In {\em G\'eom\'etrie diff\'erentielle ({P}aris, 1986)}, volume~33 of
  {\em Travaux en Cours}, pages 65--139. Hermann, Paris, 1988.

\bibitem[Kir74]{Kir}
A.~Kirilov.
\newblock {\em El\'ements de la th\'eorie des repr\'esentations}.
\newblock M.I.R. Moscou, 1974.

\bibitem[KN96]{kobayashi-nomizu}
Shoshichi Kobayashi and Katsumi Nomizu.
\newblock {\em Foundations of differential geometry. {V}ol. {I}}.
\newblock Wiley Classics Library. John Wiley \& Sons Inc., New York, 1996.
\newblock Reprint of the 1963 original.

\bibitem[Mel09]{Melnick}
Karin Melnick.
\newblock Compact {L}orentz manifolds with local symmetry.
\newblock {\em J. Differential Geom.}, 81(2):355--390, 2009.

\bibitem[Nom60]{Nomizu}
Katsumi Nomizu.
\newblock On local and global existence of {K}illing vector fields.
\newblock {\em Ann. of Math. (2)}, 72:105--120, 1960.

\bibitem[PTV96]{Tri}
Friedbert Pr{\"u}fer, Franco Tricerri, and Lieven Vanhecke.
\newblock Curvature invariants, differential operators and local homogeneity.
\newblock {\em Trans. Amer. Math. Soc.}, 348(11):4643--4652, 1996.

\bibitem[Ros61]{Rosenlicht}
M.~Rosenlicht.
\newblock On quotient varieties and the affine embedding of certain homogeneous
  spaces.
\newblock {\em Trans. Amer. Math. Soc.}, 101:211--223, 1961.

\bibitem[Sin60]{Singer}
I.~Singer.
\newblock Infinitesimally homogeneous spaces.
\newblock {\em Comm. Pure Appl. Math.}, 13:685--697, 1960.

\bibitem[Thu97]{thurston}
William~P. Thurston.
\newblock {\em Three-dimensional geometry and topology. {V}ol. 1}, volume~35 of
  {\em Princeton Mathematical Series}.
\newblock Princeton University Press, Princeton, NJ, 1997.
\newblock Edited by Silvio Levy.

\bibitem[Wol67]{Wolf}
Joseph~A. Wolf.
\newblock {\em Spaces of constant curvature}.
\newblock McGraw-Hill Book Co., New York, 1967.

\bibitem[Zeg96]{Zeghib}
Abdelghani Zeghib.
\newblock Killing fields in compact {L}orentz {$3$}-manifolds.
\newblock {\em J. Differential Geom.}, 43(4):859--894, 1996.

\end{thebibliography}
\bibliographystyle{alpha}

\end{document}